\newcommand{\nats}{{\mathbb N}}
\newcommand{\ints}{{\mathbb Z}}
\def\A{\mathbb{A}}
\def\G{\mathcal{G}}
\def\Gab{\mathcal{G}_{\mbox{ab}}}
\newtheorem{thm}{Theorem}
\newtheorem{theorem}{Theorem}[section]
\newtheorem{lemma}{Lemma}
\newtheorem{corollary}[thm]{Corollary}
\newtheorem{proposition}[thm]{Proposition}
\newtheorem{remark}[theorem]{Remark}
\theoremstyle{definition}
\begin{document}

\begin{frontmatter}

\title{On a group theoretic generalization of the Morse-Hedlund theorem}

\author[label1]{\'Emilie Charlier}
  \ead{echarlier@ulg.ac.be}

\author[label2]{Svetlana Puzynina\fnref{label4}}
  \ead{s.puzynina@gmail.com}

   \author[label2,label3]{Luca Q. Zamboni}
  \ead{lupastis@gmail.com}

\address[label1]{D\'epartement de Math\'ematique, Universit\'e de Li\`ege, Belgium}
\address[label2]{LIP, ENS de Lyon, Universit\'e de Lyon, France and Sobolev Institute of
Mathematics, Novosibirsk, Russia}
\fntext[label4]{Supported by the LABEX MILYON (ANR-10-LABX-0070) of Universit\'e de Lyon,
within the program ÒInvestissements d'AvenirÓ (ANR-11-IDEX-0007)
operated by the French National Research Agency (ANR).}
\address[label3]{Institut Camille Jordan, Universit\'e Lyon 1, France}

\begin{abstract}
In their 1938 seminal paper on symbolic dynamics, Morse and Hedlund proved
that every aperiodic infinite word $x\in \A^\nats,$ over a non empty finite alphabet $\A,$ contains at least $n+1$ distinct factors of each length $n.$  They further showed that an infinite word $x$ has exactly $n+1$ distinct factors of each length $n$ if and only if $x$ is binary, aperiodic and balanced, i.e., $x$ is a Sturmian word. In this paper we obtain a broad generalization of the Morse-Hedlund theorem via group actions. Given a  subgroup $G$ of the symmetric group $S_n, $ let $1\leq \epsilon(G)\leq n$ denote the number of distinct $G$-orbits of $\{1,2,\ldots ,n\}.$  Since $G$ is a subgroup of $S_n,$ it acts on $\A^n=\{a_1a_2\cdots a_n\,|\,a_i\in \A\}$ by permutation.  Thus, given an infinite word $x\in \A^\nats$ and  an infinite sequence $\omega=(G_n)_{n\geq 1}$ of subgroups $G_n \subseteq S_n,$ we consider the complexity function $p_{\omega ,x}:\nats \rightarrow \nats$  which counts for each length $n$ the number of equivalence classes of factors of $x$ of length $n$ under the action of $G_n.$ We show that if $x$ is aperiodic, then $p_{\omega, x}(n)\geq\epsilon(G_n)+1$ for each $n\geq 1,$ and moreover, if equality holds for each $n,$ then $x$ is Sturmian.  Conversely,  let $x$ be a Sturmian word. Then for every infinite sequence $\omega=(G_n)_{n\geq 1}$ of Abelian subgroups $G_n \subseteq S_n,$   there exists $\omega '=(G_n')_{n\geq 1}$ such that for each $n\geq 1:$ $G_n'\subseteq S_n$ is isomorphic to $G_n$ and $p_{\omega',x}(n)=\epsilon(G'_n)+1.$ 
Applying the above results to the sequence $(Id_n)_{n\geq 1},$ where $Id_n$ is the trivial subgroup of $S_n$ consisting only of the identity, we recover both directions of the Morse-Hedland theorem.
\end{abstract}

\begin{keyword}Symbolic dynamics,  complexity.
\MSC[2010] 37B10
\end{keyword}
\journal{}

\end{frontmatter}


 
 \section{Introduction}

 A celebrated result of Morse and Hedlund states that  every aperiodic (meaning non-ultimately periodic) infinite word $x\in \A^\nats,$ over a non empty finite alphabet $\A,$ contains at least $n+1$ distinct factors of each length $n.$  They further showed that an infinite word $x$ has exactly $n+1$ distinct factors of each length $n$ if and only if $x$ is binary, aperiodic and balanced, i.e., $x$ is a Sturmian word. Thus Sturmian words are those aperiodic words of lowest factor complexity. They arise naturally in many different areas of mathematics including combinatorics, algebra,
number theory, ergodic theory, dynamical systems and differential equations. Sturmian words
also have implications in theoretical physics as $1$-dimensional models of quasi-crystals, and in theoretical computer science where they
are used in computer graphics as digital approximation of straight lines. Despite their simplicity, Sturmian words possess some very deep and mysterious properties   (see \cite{Je1, Je2, Je3, JeAna}). 

There are several variations and extensions of the Morse-Hedlund theorem associated with other notions of complexity including Abelian complexity \cite{CovHed, RSZ}, maximal pattern complexity (introduced by Kamae in \cite{KaZa}),  and palindrome complexity \cite{ABCD} to name just a few. In most cases, these alternative notions of complexity may be used to detect (and in some cases characterize) ultimately periodic words. Generally, amongst all aperiodic
words, Sturmian words  have the lowest possible complexity, although in some cases they are not the only ones (for instance, a restricted class of Toeplitz words is found to have the same maximal pattern complexity as Sturmian words).    There have also been numerous attempts at extending the Morse-Hedlund theorem in higher dimensions. A celebrated conjecture of M. Nivat states that any $2$-dimensional word having at most $mn$ distinct $m\times n$ blocks must be periodic. In this case, it is known that the converse is not true. To this day the Nivat conjecture remains open although the conjecture has been verified for $m$ or $n$ less or equal to $3$ (see \cite{SaTi, CK}).   A very interesting higher dimensional analogue of the Morse-Hedlund theorem was recently obtained by Durand and Rigo in \cite{DuRi} in which they re-interpret the notion of periodicity in terms of Presburger arithmetic.
 
 In this note we give a $1$-dimensional generalization of the Morse-Hedlund theorem in terms of groups actions.  Many naturally occurring dynamical systems come equipped with a group action, however this is not our point of view.  We consider the group action of a permutation group $G\subseteq S_n$ acting on $\A^n,$ the set of words of length $n$ with values in a  finite alphabet $\A.$  In this way, $G$ can only identify elements in the same Abelian class. One advantage to this approach is that it allows us to compare different subshifts relative to the same group action. 
 In order to define a complexity function $p:\nats \rightarrow \nats,$ we consider infinite sequences of permutation groups $\omega= (G_n)_{n\geq 1}$ with each $G_n \subseteq S_n.$  Associated with every such  sequence, and to every infinite word $x\in \A^\nats,$  is a complexity function $p_{\omega ,x}:\nats \rightarrow \nats$  which counts for each length $n$ the number of equivalence classes of factors of $x$ of length $n$ under the action of $G_n.$ 
 We show that if $x$ is aperiodic, then $p_{\omega, x}(n)\geq\epsilon(G_n)+1$ for each $n\geq 1,$  where $\epsilon(G_n)$ is the number of distinct $G$-orbits of $\{1,2,\ldots ,n\}.$ We further show that if equality holds for each $n,$ then $x$ is Sturmian.   In order to obtain a suitable converse, we restrict to sequences of Abelian permutation groups $G_n\subseteq S_n.$  We prove that if $x$ is a Sturmian word, then for every infinite sequence of Abelian permutation groups $(G_n)_{n\geq 1}$ there exists $\omega '=(G_n')_{n\geq 1}$ such that for each $n\geq 1:$ $G_n'\subseteq S_n$ is isomorphic to $G_n$ and $p_{\omega',x}(n)=\epsilon(G'_n)+1.$ This latter result is obtained by associating to each $G_n$ a system of discrete $3$-interval exchange transformations which acts on the factors  of a Sturmian word of length $n.$  Applying these results to the sequence $(Id_n)_{n\geq 1},$ where $Id_n$ is the trivial subgroup of $S_n$ consisting only of the identity, we recover both directions of the Morse-Hedlund theorem.

\section{Main Results}

Let $S_n$ denote the symmetric group on $n$-letters which we regard as the set of all bijections of $\{1,2,\ldots ,n\}.$ Fix a subgroup $G\subseteq S_n.$ We consider the $G$-action $G\times \{1,2,\ldots ,n\}\rightarrow \{1,2,\ldots ,n\}$ given by $g* i=g(i)$ and let $\epsilon(G)$ denote the number of distinct orbits, i.e., \[\epsilon(G)= \mbox{Card}(\{G* i\,|\, i\in \{1,2,\ldots ,n\}\})\]
where $G* i=\{g* i\,|\, g\in G\}$ denotes the $G$-orbit of $i.$ 
For instance if $G$ is the trivial subgroup of $S_n$ consisting only of the identity, then $\epsilon(G)=n,$ while if $G$ contains an $n$-cycle, then $\epsilon(G)=1.$ 
We note $\epsilon(G)$ depends strongly on the embedding of $G$ in $S_n,$ and in fact is not  a group isomorphism invariant, even for isomorphic subgroups of $S_n.$ For instance,
the subgroups $G_1=\{e, (1,2),(3,4),(1,2)(3,4)\}$ and $G_2=\{e, (1,2)(3,4),(1,3)(2,4),(1,4)(2,3)\}$ are two embeddings of the Klein four-group $\ints/2\ints \times \ints/2\ints$ in $S_4,$ and  yet $\epsilon(G_1)=2$ while $\epsilon(G_2)=1.$ On the other hand, it is easily checked that  $\epsilon (G)$ depends only on the conjugacy class of $G$ in $S_n.$  

Let $\A$ be a finite non-empty set. For each $n\geq 1,$  let $\A^n$ denote the set of all words $u=u_1u_2\cdots u_n$ with $u_i\in \A.$ For $a\in \A$ we denote by $|u|_a$ the number of occurrences of the symbol $a$ in $u.$  Two words $u,v\in \A^n$ are {\it Abelian equivalent}, written $u\sim_{ab}v,$ if $|u|_a=|v|_a$ for each $a\in \A.$ 
It is convenient to consider elements of $\A^n$ as functions $u:\{1,2,\ldots ,n\}\rightarrow \A$ where $u(i)=u_i\in \A$ for $1\leq i\leq n.$ For each subset $S\subseteq \{1,2,\ldots ,n\}$ we denote by $u|_S$ the restriction of $u$ to $S.$
There is a natural $G$-action $G\times \A^n\rightarrow \A^n$  given by  $g* u: i\mapsto u( g^{-1}(i))$ for each $i\in \{1,2,\ldots ,n\}.$ In terms of the word representation we have $g* u=u_{g^{-1}(1)}u_{g^{-1}(2)}\cdots u_{g^{-1}(n)}.$
In particular we have $g*u\sim_{ab}u$ for all $g\in G.$ 

Let $x=x_0x_1x_2\cdots \in \A^\nats$ be an infinite word.  Then $G$ defines an equivalence relation $\sim_G$ on  $\mbox{Fact}_x(n)=\{x_ix_{i+1}\cdots x_{i+n-1}\,|\,i\geq 0\},$ the set of factors of $x$ of length $n,$ given by $u\sim_G v$ if and only if $g* u=v$ for some $g\in G,$ in other words if $u$ and $v$ are in the same $G$-orbit relative to the action of $G$ on $\A^n.$ We say the action of $G$ on $\mbox{Fact}_x(n)$ is {\it Abelian transitive} if for all $u,v\in  \mbox{Fact}_x(n)$ we have $u\sim_{ab} v\Leftrightarrow u\sim_G v.$ 

We are interested in the number of distinct $\sim_G$ equivalence classes, i.e., $\mbox{Card}(\mbox{Fact}_x(n)/\sim_G).$ Unlike $\epsilon(G),$ this quantity is not a conjugacy invariant of $G$ in $S_n.$ 
For instance,  consider the  cyclic subgroups $G_1=\langle \sigma_1\rangle$ and $G_2=\langle \sigma_2\rangle$ of $S_4$ where $\sigma_1=(1,2,3,4)$ and $\sigma_2=(1,3,2,4).$ Let $x$ denote the Fibonacci word fixed by the substitution $0\mapsto 01, 1\mapsto 0.$ Then $\mbox{Fact}_x(4)=\{0010, 0100, 0101, 1001, 1010\}$  and
\[\mbox{Fact}_x(4)/\sim_{G_1}=\{[0100\overset{\sigma_1}{\curvearrowright} 0010];[0101\overset{\sigma_1}{\curvearrowright}  1010]; [1001]\}\]
while 
\[\mbox{Fact}_x(4)/\sim_{G_2}=\{[0010\overset{\sigma_2}{\curvearrowright} 0100];[0101\overset{\sigma_2}{\curvearrowright} 1001\overset{\sigma_2}{\curvearrowright} 1010]\}.\]
We observe that the two equivalence classes relative to $\sim_{G_2}$ correspond to the two Abelian classes of
$\mbox{Fact}_x(4).$ Thus the action of $G_2$ on $\mbox{Fact}_x(4)$ is Abelian transitive, while that of $G_1$ is not. 
On the other hand if $y\in \{0,1\}^\nats$ is such that $\mbox{Fact}_y(4)=\{0000, 0001, 0010, 0100, 1000\},$ then the actions of  both $G_1$ and $G_2$ on $\mbox{Fact}_y(4)$ are Abelian transitive.

We apply the above considerations to define a complexity function on infinite words. More precisely, we consider 
the category $\G$ whose objects are all infinite sequences $(G_n)_{n\geq 1}$ where $G_n$ is a subgroup of $S_n$ and where $\mbox{Hom}((G_n)_{n\geq 1}, (G'_n)_{n\geq 1})$ is the collection of all $(f_n)_{n\geq 1}$ where $f_n:G_n\rightarrow G'_n$ is a group homomorphism. 
Two elements $(G_n)_{n\geq 1}, (G'_n)_{n\geq 1} \in \G$ are said to be conjugate if there exists 
$(\sigma_n)_{n\geq 1}$ with $\sigma_n\in S_n$ such that $G'_n=\sigma_n G_n\sigma_n^{-1}$ for each $n\geq 1,$ and isomorphic if there exists $(f_n)_{n\geq 1}\in \mbox{Hom}((G_n)_{n\geq 1}, (G'_n)_{n\geq 1})$ such that 
$f_n:G_n\rightarrow G'_n$ is a group isomorphism for each $n\geq 1.$ 
Associated with every $\omega=(G_n)_{n\geq 1}\in \G$  is a complexity function $p_{\omega ,x}:\nats \rightarrow \nats$  which counts for each length $n$ the number of $\sim_{G_n}$ equivalence classes of factors of $x$ of length $n.$

\begin{thm}\label{GMH1} Let $x\in \A^\nats$ be aperiodic. Then for every infinite sequence $\omega=(G_n)_{n\geq 1}\in \G$ we have $p_{\omega, x}(n) \geq \epsilon(G_n)+1$ for each $n\geq 1.$ Moreover if $p_{\omega, x}(n) =\epsilon(G_n)+1$ for each $n\geq 1,$ then $x$ is Sturmian.
\end{thm}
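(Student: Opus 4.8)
The plan is to handle both assertions by first replacing $G_n$ with the largest subgroup having the same orbits, and then inducting on the number of orbits $\epsilon(G_n)$. Since $g*u\sim_{ab}u$ for every $g$, the relation $\sim_{G_n}$ refines Abelian equivalence. Let $O_1,\dots,O_\epsilon$ (with $\epsilon=\epsilon(G_n)$) be the orbits of $G_n$ on $\{1,\dots,n\}$ and let $\widehat G_n=S_{O_1}\times\cdots\times S_{O_\epsilon}\subseteq S_n$ permute each orbit internally. Every $g\in G_n$ stabilises each orbit setwise, so $G_n\subseteq\widehat G_n$ and $\epsilon(\widehat G_n)=\epsilon(G_n)$; hence the $G_n$-classes of factors refine the $\widehat G_n$-classes and $p_{\omega,x}(n)\ge\mbox{Card}(\mbox{Fact}_x(n)/\!\sim_{\widehat G_n})$. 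As two words are $\widehat G_n$-equivalent exactly when they have the same Parikh vector on each block $O_j$, writing $\phi(u)=(\mu_1(u),\dots,\mu_\epsilon(u))$ for this profile, it suffices to show that an aperiodic word has at least $\epsilon+1$ distinct profiles, the equality case (for all $n$) forcing $x$ Sturmian.

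\emph{Base case.} For $\epsilon=1$ the profile is the total Parikh vector, so the claim is that the Abelian complexity $a_x(n):=\mbox{Card}(\mbox{Fact}_x(n)/\!\sim_{ab})$ is at least $2$. Were $a_x(n)=1$, all length-$n$ windows would share a Parikh vector; comparing consecutive windows $x_i\cdots x_{i+n-1}$ and $x_{i+1}\cdots x_{i+n}$ gives $x_i=x_{i+n}$ for every $i$, making $x$ periodic, a contradiction.

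\emph{Inductive step --- the main obstacle.} For $\epsilon\ge2$ I would merge two orbits $O_a,O_b$, obtaining a coarser partition $\Pi'$ with $\epsilon-1$ blocks (its profile is $\phi$ with the $a$- and $b$-entries added). By induction $\Pi'$ already has at least $\epsilon$ profiles, so it is enough to find one pair $a,b$ for which the merge strictly drops the count, i.e.\ two factors differing only by a transfer of letters between $O_a$ and $O_b$, yielding \[\mbox{Card}(\mbox{Fact}_x(n)/\!\sim_{\widehat G_n})\ \ge\ 1+\mbox{Card}(\mbox{Fact}_x(n)/\!\sim_{\Pi'}).\] Proving that such a separating pair always exists is the crux. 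I expect aperiodicity to be exactly what is needed here: if \emph{no} merge lowered the count, the profiles would be rigid (any two differing in at least three blocks at once), which I would translate into a saturated family of linear identities $\sum_p\mu_p\,x_{i+p}=c$, valid at every window with $\mu$ constant on the blocks; iterating the consecutive-window difference from the base case on such a saturated system should then force a period, contradicting aperiodicity.

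\emph{Equality and the Sturmian conclusion.} Taking $n=1$, where $S_1$ is trivial and $\epsilon=1$, gives $p_{\omega,x}(1)=2$, so $x$ is binary. For general $n$, iterate the inductive step to build a chain of single merges from the orbit partition down to the one-block partition, each step lowering the profile count by at least $1$. Since equality forces $\mbox{Card}(\mbox{Fact}_x(n)/\!\sim_{\widehat G_n})=\epsilon+1$, descending through $\epsilon-1$ merges to $a_x(n)\ge2$ forces every drop to be exactly $1$ and $a_x(n)=2$ for all $n$. A binary aperiodic word of constant Abelian complexity $2$ is balanced, hence Sturmian. Specialising to $(Id_n)_{n\ge1}$ recovers both halves of Morse--Hedlund.
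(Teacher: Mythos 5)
Your reduction to the Young subgroup $\widehat G_n=S_{O_1}\times\cdots\times S_{O_\epsilon}$ and to per-orbit Parikh profiles is sound (it is exactly the paper's relation $\sim_k$ on the orbit partition), and your base case $\epsilon=1$ is correct. But the crux of your inductive step --- the lemma that for an aperiodic word some pair-merge of blocks strictly drops the profile count, i.e.\ that there always exist two factors differing only by a transfer of letters between two blocks --- is false. Take $x=a^{n_1}bc\,a^{n_2}bc\,a^{n_3}bc\cdots$ over $\{a,b,c\}$ with $(n_i)$ an aperiodic sequence over $\{1,2\}$; then $x$ is aperiodic and $\mbox{Fact}_x(2)=\{aa,ab,bc,ca\}$, while none of $ba,cb,ac$ occurs. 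With $n=2$ and the partition $\{1\},\{2\}$ (so $\epsilon=2$), the full profiles are the four factors themselves and the merged profiles are the four Abelian classes, each a singleton: the unique available merge drops nothing, yet $x$ is aperiodic. The same example refutes your fallback sketch that ``no merge drops'' forces rigidity and hence a period. Note the theorem's conclusion still holds there ($4\geq\epsilon+1=3$) only because the merged count already exceeds the inductive bound; so at best your induction splits into a dichotomy, and in the critical branch (merged count exactly $\epsilon$) you still owe the entire argument --- the ``saturated linear identities'' paragraph is a hope, not a proof. The equality half of your proposal inherits the same defect: without ``each merge drops by exactly $1$'' you cannot descend from $\epsilon+1$ to conclude $a_x(n)=2$, since zero drops at some steps are consistent with all the inductive lower bounds.

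The paper avoids merging altogether and refines top-down, which is why it never needs Abelian-equivalent witness pairs. Order the orbits $E_1,\ldots,E_k$ so that $i<j\Rightarrow\max E_i<\max E_j$. Aperiodicity gives a right special factor $u$ of length $\max E_{j+1}-1$; extending $ua$ and $ub$ to factors $U,V$ of length $n$, all positions of $E_1,\ldots,E_j$ (and all positions of $E_{j+1}$ except its maximum) lie in the common prefix, so $U\sim_j V$, while at the single position $\max E_{j+1}\in E_{j+1}$ they carry $a\neq b$, so $U\nsim_{j+1}V$. Each refinement thus adds at least one class, giving $\mbox{Card}(\mbox{Fact}_x(n)/\sim_k)\geq k+1$ directly; the witnesses are deliberately \emph{not} Abelian equivalent (they differ in one letter), which is precisely the pair your merge lemma cannot produce. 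For the equality case the paper argues structurally rather than through Abelian complexity: if $x$ is binary, aperiodic and not Sturmian, then at the first divergence length $n$ one has $\mbox{Fact}_x(n)=\mbox{Fact}_y(n)\cup\{0u0,1u1\}$ for some Sturmian $y$ and bispecial $u$; since $0u0$ and $1u1$ lie in distinct Abelian classes and at most one is a factor of $y$, the first part applied to $y$ yields $p_{\omega,x}(n)\geq p_{\omega,y}(n)+1\geq\epsilon(G_n)+2$, a contradiction. You would need to replace your inductive mechanism with something of this kind (right special factors plus the ordering-by-maxima trick) for both halves of the theorem.
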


To obtain a converse to Theorem~\ref{GMH1}, we restrict to the sub category 
$\Gab$ of all infinite sequences $(G_n)_{n\geq 1}$ of Abelian subgroups of $S_n.$
The following constitutes a partial converse to Theorem~\ref{GMH1}:

\begin{thm}\label{GMH2} Let $x$ be a Sturmian word. Then for each infinite sequence  $\omega =(G_n)_{n\geq 1}\in \Gab$  of Abelian permutation groups there exists $\omega'=(G'_n)_{n\geq 1}\in \Gab$ isomorphic to $\omega$ such that for each $n\geq 1$ we have  $p_{\omega',x}(n)=\epsilon(G'_n)+1.$  
\end{thm}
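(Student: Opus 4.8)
The plan is to prove the statement one length at a time, since the sequence $\omega'$ can be built component-wise: it suffices to fix the Sturmian word $x$, fix $n\geq 1$, fix an Abelian $G_n\subseteq S_n$, and produce an embedding $G'_n\hookrightarrow S_n$ with $G'_n\cong G_n$ and $p_{\omega',x}(n)=\epsilon(G'_n)+1$. The input is the combinatorial structure of $\mathcal{F}=\mbox{Fact}_x(n)$: since $x$ is Sturmian, $\mbox{Card}(\mathcal F)=n+1$, and since $x$ is binary and balanced, the number of $1$'s in a length-$n$ factor takes exactly two values, so $\mathcal F$ splits into exactly two Abelian classes $A$ and $B$. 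Coding $x$ by the rotation $R_\alpha$ on the circle, the $n+1$ factors correspond to the $n+1$ arcs cut out by the points $\{-j\alpha\}$ for $0\leq j\leq n$; in this cyclic ordering $A$ and $B$ are two complementary arcs, and by the three-distance theorem the family of arcs carries the structure of a discrete three-interval exchange. This is the device alluded to in the introduction, and I would use it to record precisely which pairs of factors are related by swapping a single (adjacent) pair of positions.

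The principle driving the construction is a \emph{matching of merge rates}. For the trivial group one has $\epsilon=n$ and $p_{\omega',x}(n)=n+1$, so that $p-\epsilon=1$; the goal is to choose $G'_n$ so that this difference is preserved. Because $g*u\sim_{ab}u$ for all $g$, every $\sim_{G'_n}$-class lies inside a single Abelian class, so each orbit lives in $A$ or in $B$ and enlarging the group can only amalgamate orbits. Since $G_n$ is Abelian it acts freely on each of its orbits on $\{1,\dots,n\}$, so an embedding of $G_n$ is exactly a partition of $\{1,\dots,n\}$ into blocks $B_1,\dots,B_r$ (whence $r=\epsilon(G'_n)$, with trivial action on singleton blocks) together with a regular Abelian action on each block whose product is isomorphic to $G_n$. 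I would place these blocks on the ``swap sites'' of the factor three-interval exchange so that the generator acting on each $B_i$ threads through one Abelian class as a single chain, amalgamating exactly one pair of factor-classes for every pair of positions it amalgamates.

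The key steps, in order, are: (i) set up the arc / three-interval-exchange coding of $\mathcal F$, identify the two Abelian arcs, and list the admissible single-transposition moves between factors; (ii) present $G_n$ via the structure theorem for finite Abelian groups and translate an embedding into a choice of blocks with regular cyclic actions, recording that $\epsilon(G'_n)$ equals the number of blocks; (iii) align the blocks with the swap sites so that, on each block, the induced action on factors is a discrete interval exchange whose orbits within an Abelian class form a single chain; and (iv) verify $G'_n\cong G_n$ and count orbits to obtain $p_{\omega',x}(n)=\epsilon(G'_n)+1$.

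The hard part is (iii)--(iv): showing the merge rates stay in lock-step. It is easy to merge factor-classes too quickly (the double transposition $(1,2)(3,4)$ in $S_4$ lowers $\epsilon$ by two but the factor count by only one) or to strand fixed factors that inflate $p$ (as the conjugate Klein embedding $G_2=\{e,(1,2)(3,4),(1,3)(2,4),(1,4)(2,3)\}$ does, giving three factor-classes against $\epsilon=1$). The delicate claim is therefore that for \emph{every} Abelian isomorphism type and every $n$ there exists at least one placement of the blocks along the three-interval exchange that keeps the two counts equal up to the single additive constant. This is exactly where the balance of $x$ (two Abelian classes, realized as arcs) and the freeness of Abelian actions on their orbits are both indispensable, and it is the reason the converse is confined to $\Gab$.
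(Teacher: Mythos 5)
Your architecture matches the paper's in outline: decompose $G_n$ via the structure theorem into cyclic factors of prime-power orders $m_1,\dots,m_k$, realize each factor as a cycle acting on a block of positions, and arrange the blocks so that the orbit count on positions and the class count on factors move in lock-step. But the proposal stops exactly where the proof has to begin. Your step (iii) --- that on each block the cyclic generator can be chosen so that its orbits inside each Abelian class of restricted factors form a single chain --- is not an alignment detail to be deferred; it is the entire content of the theorem, and you concede as much by calling (iii)--(iv) ``the delicate claim.'' The paper proves it as a standalone lemma (its Lemma~\ref{array cycle}): for every Sturmian $x$ and every length $m$, choose consecutive bispecial factors $w',w$ with $|w'|+2<m\le |w|+2$, let $p,q$ be the Fine--Wilf periods of the central word $w$, and set $a=m-p$, $b=p+q-m$, $c=m-q$; since $\gcd(a+b,b+c)=\gcd(q,p)=1$, Pak's lemma makes the associated $abc$-permutation $\sigma$ a single $m$-cycle, and the cyclic-shift structure of the lexicographic Christoffel array (Theorem~C of \cite{JeZa}, plus the two-position-difference fact from \cite{BoRe}) shows $\sigma$ sends each factor to its lexicographic successor within its Abelian class, i.e.\ $\langle\sigma\rangle$ acts Abelian transitively on $\mathrm{Fact}_x(m)$. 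Your rotation/three-distance picture names the right objects but never constructs this cycle, nor shows that one permutation simultaneously realizes all the ``swap moves'' as a single orbit per Abelian class.

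Three further concrete gaps. First, the blocks must be \emph{intervals}, not arbitrary subsets placed on ``swap sites'': only for an interval $E_i$ is $\{u|_{E_i} : u\in \mathrm{Fact}_x(n)\}=\mathrm{Fact}_x(m_i)$, which is what lets the one-block lemma apply to restrictions; for a non-interval block the restricted words need not be factors of $x$ and Abelian transitivity can fail. Second, the final count in your step (iv) is not automatic from a ``merge rate'' heuristic: the paper needs a separate counting lemma (its Lemma~\ref{intpart}) showing, via the balance property applied to the union $E_i\cup\cdots\cup E_{j+1}$ (an interval, hence carrying factors of $x$), that refining by one more block splits at most one class --- your proposal silently assumes this exact-count statement, which is precisely where balance enters. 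Third, you assert that a partition of $\{1,\dots,n\}$ into blocks of sizes $m_1,\dots,m_k$ carrying regular cyclic actions exists; that requires $m_1+\cdots+m_k\le n$, which is Hoffman's trace bound $T(G)\le n$ for an Abelian subgroup of $S_n$ (the paper's Proposition~\ref{trace}, after which one pads with trivial factors to get $T(G)=n$). This is a genuine theorem, not a triviality, and without it your blocks may not fit. A minor but real error as well: an Abelian permutation group need not act freely on each orbit (only the faithful quotient acts regularly), and an embedding of an Abelian group is in general a subdirect product of its block actions, not a full product --- harmless for the construction direction you need, but your ``an embedding is exactly'' claim is false as stated.
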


\begin{remark}\rm{Let $\omega=(Id_n)_{n\geq1}\in \Gab,$ where $Id_n$ denotes the trivial subgroup of $S_n$ consisting only of the identity. Then $\epsilon(Id_n)=n$ for each $n\geq 1.$ Moreover, for each infinite word $x,$ we have that $p_{\omega,x}(n)=\mbox{Card}(\mbox{Fact}_x(n)).$   Thus applying Theorem~\ref{GMH1} to $\omega$  we deduce that every aperiodic word $x$ contains at least $n+1$ distinct factors of length $n$ and that if $x$ has exactly $n+1$ distinct factors of each length $n,$ then $x$ is Sturmian. Conversely, if $x$ is Sturmian, then Theorem~\ref{GMH2} applied to 
 $\omega$  implies that $x$ contains exactly  $n+1$ distinct factors of length $n.$ Thus we recover the full Morse-Hedlund theorem.  On the opposite extreme, applying Theorem~\ref{GMH1} to the sequence $\omega=(S_n)_{n\geq 1},$ we recover a result of Coven and Hedlund in \cite{CovHed}.}

\end{remark}

Before embarking on the proofs of Theorems~\ref{GMH1}\&~\ref{GMH2} we review a few basic facts concerning aperiodic words in general and Sturmian words in particular.  For all other definitions and basic notions in combinatorics on words we refer the reader to \cite{Lo, Lo2}.  
 A factor $u$ of an infinite word $x\in \A^\nats$ is called {\it left special} (resp. {\it right special}) if there exist distinct symbols $a,b\in \A$ such that $au$ and $bu$ (resp. $ua$ and $ub$) are factors of $x.$ A factor $u$ which is both left and right special is called {\it bispecial}. If $x$ is aperiodic, then $x$ admits at least one left and one right special factor of each given length. An infinite word $x\in \A^\nats$ is said to be {\it balanced}  if for every pair of factors $u$ and $v$ of $x$ of equal length we have $||u|_a-|v|_a|\leq 1$ for every $a\in \A.$  An infinite word is called  
{\it Sturmian} if it is  aperiodic, binary and  balanced. Equivalently, $x$ is Sturmian if $x$ admits precisely $n+1$ distinct factors of each length $n.$ This implies that $x$ admits exactly one left and one right special factor of each length. Moreover, the set of factors of a Sturmian word is closed under reversal, i.e., $u=u_1u_2\cdots u_n$ is a factor of $x$ if and only if the reverse of $\bar{u}=u_n\cdots u_2u_1$ is a factor of $x$ (see for instance Chapter 2 in \cite{Lo2}). Thus, the right special factors of a Sturmian word are precisely the reversals of the left special factors and vice versa. In particular, bispecial factors of a Sturmian word are palindromes.  Given a factor $u$ of a  Sturmian word $x\in \{0,1\}^\nats$ and $a\in \{0,1\},$ we say  $u$ is {\it rich} in $a$ if $|u|_a\geq |v|_a$ for all factors $v$ of $x$ of length equal to that of $u.$

\begin{proof}[Proof of Theorem~\ref{GMH1}]  We will make use of the following lemma:
\begin{lemma}\label{part} Let $E_1,E_2,\ldots ,E_k$ be a partition of $\{1,2,\ldots ,n\}$ ordered so that $i<j\Rightarrow \max E_i<\max E_j.$ For each $1\leq j\leq k,$ let $\sim_j$ denote the equivalence relation on $\A^n$ defined by $u\sim_j v$ if and only if $u|_{E_i}\sim_{ab} v|_{E_i}$ for each $1\leq i \leq j.$ Then for each aperiodic word $x\in \A^\nats$ and for each $1\leq j\leq k$ we have $\mbox{Card}(\mbox{Fact}_x(n)/\sim_{j})\geq j+1.$
\end{lemma}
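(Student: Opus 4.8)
The plan is to prove the bound by induction on $j$, adopting the convention that $\sim_0$ is the trivial relation putting all of $\A^n$ in one class, so that the claim becomes $\mbox{Card}(\mbox{Fact}_x(n)/\sim_j)\geq j+1$ for $0\leq j\leq k$ and holds vacuously at $j=0$. The first remark is that $\sim_j$ refines $\sim_{j-1}$: agreement of $u,v$ on $E_1,\ldots,E_j$ forces agreement on $E_1,\ldots,E_{j-1}$, so every $\sim_{j-1}$-class is a disjoint union of $\sim_j$-classes. Consequently it is enough to exhibit, at each step $1\leq j\leq k$, a single $\sim_{j-1}$-class that splits into at least two $\sim_j$-classes; this yields $\mbox{Card}(\mbox{Fact}_x(n)/\sim_j)\geq \mbox{Card}(\mbox{Fact}_x(n)/\sim_{j-1})+1$, and the desired inequality then follows by telescoping from the base value $1$.

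The key structural observation, and the one point where the ordering $\max E_1<\cdots<\max E_k$ is really used, is that for $i\leq j$ one has $E_i\subseteq\{1,\ldots,m_j\}$, where $m_j=\max E_j$. Hence whether $u\sim_j v$ holds depends only on the length-$m_j$ prefixes of $u$ and $v$. More precisely, for $i<j$ we have $\max E_i\leq m_{j-1}<m_j$, so the blocks $E_1,\ldots,E_{j-1}$ all lie among the first $m_j-1$ positions, whereas $m_j\in E_j$ itself.

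To produce the splitting I would invoke aperiodicity through right special factors, using the fact recalled in the excerpt that an aperiodic $x$ has a right special factor of every length. Take such a factor $w$ of length $m_j-1$ (when $m_j=1$ this reduces to the statement that at least two letters occur in $x$, which holds since an aperiodic word is non-constant). Then there are distinct $a,b\in\A$ with $wa,wb\in\mbox{Fact}_x(m_j)$, and since $x$ is infinite and $m_j\leq n$ each extends to a factor of length $n$; call these $U,V\in\mbox{Fact}_x(n)$, with prefixes $wa$ and $wb$. Because $U$ and $V$ share the prefix $w$ of length $m_j-1\geq m_{j-1}$, they agree on $E_1,\ldots,E_{j-1}$, so $U\sim_{j-1}V$; on $E_j$ they agree at every position below $m_j$ (again through $w$) and read $a$ versus $b$ at position $m_j$, whence $U|_{E_j}\not\sim_{ab}V|_{E_j}$ and $U\not\sim_j V$. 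Thus the common $\sim_{j-1}$-class of $U$ and $V$ splits, which is exactly the strict increase we needed.

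I expect the only delicate matters to be bookkeeping rather than conceptual. The genuinely load-bearing step is the claim that $\sim_j$ depends only on the length-$m_j$ prefix: this is precisely where the hypothesis on the maxima is indispensable, since without it the later blocks could interleave with the earlier ones and occupy positions beyond $m_j$. The remaining care is in the degenerate case $m_j=1$ (an empty right special factor) and in checking that the refinement bookkeeping correctly converts one split into a $+1$ on the class count; both are routine.
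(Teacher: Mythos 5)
Your proof is correct and takes essentially the same approach as the paper's: there too, a right special factor of length $\max E_{j+1}-1$ is extended to $U,V\in\mbox{Fact}_x(n)$ with $U\sim_j V$ but $U\nsim_{j+1}V$, giving the strict increase of the class count at each step. Your $\sim_0$ convention merely folds the paper's separately treated base case $\mbox{Card}(\mbox{Fact}_x(n)/\sim_1)\geq 2$ into the induction; otherwise the arguments coincide.
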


\begin{proof} Let $x\in \A^\nats$ be aperiodic. We will show that  $\mbox{Card}(\mbox{Fact}_x(n)/\sim_1)\geq 2 $ and that \[\mbox{Card}(\mbox{Fact}_x(n)/\sim_{j+1})\geq \mbox{Card}(\mbox{Fact}_x(n)/\sim_{j})+1 \] for each $1\leq j\leq k-1.$  Let $m_i=\max E_i.$ Since $x$ is aperiodic, $x$ contains at least one right special factor of each length $m\geq 0.$  In particular, there exists $u\in \A^*,$ with $|u|=m_1-1,$  and distinct letters $a,b\in \A$ such that $ua$ and $ub$ are factors of $x.$ Let $U,V\in \mbox{Fact}_x(n)$ with $ua$ a prefix of $U$ and $ub$ a prefix of $V.$ As 
$ua\nsim_{ab} ub$ and $|ua|=|ub|=m_1\in E_1,$ we have $U|_{E_1}\nsim_{ab}V|_{E_1},$ whence
$U\nsim_1 V.$ Thus $\mbox{Card}(\mbox{Fact}_x(n)/\sim_1)\geq 2 .$
Next fix $1\leq j\leq k-1.$ We will show the existence of two factor $U$ and $V$ of length $n$ such that
$U\sim_j V$ and $U\nsim_{j+1}V.$ As above, since $x$ is aperiodic, there exists $u\in \A^+,$ with $|u|=m_{j+1}-1,$  and distinct letters $a,b\in \A$ such that $ua$ and $ub$ are factors of $x.$ Let $U,V\in \mbox{Fact}_x(n)$ with $ua$ a prefix of $U$ and $ub$ a prefix of $V.$ Then for each $1\leq i\leq j$ we have $U|_{E_i}=u|_{E_i}=V|_{E_i}$ and hence $U\sim_j V.$ On the other hand, as before, since $ua\nsim_{ab} ub$ and $|ua|=m_{j+1},$ we have $U\nsim_{j+1} V.$\end{proof}

Fix $n\geq 1,$ and put $G=G_n$ and $\epsilon(G)=k.$ We will show that if $x\in \A^\nats$ is aperiodic, then $\mbox{Card}(\mbox{Fact}_x(n)/\sim_G)\geq k+1. $ Let $E_1,E_2,\ldots ,E_k$ denote the full set of $G$-orbits of $\{1,2, \ldots ,n\}.$ Then $E_1,E_2,\ldots ,E_k$ is a partition of $\{1,2,\ldots ,n\}.$
For $1\leq j\leq k$ let  $\sim_j$ denote the equivalence relation on $\mbox{Fact}_x(n)$ defined in the previous lemma. Then  for all $u,v\in \mbox{Fact}_x(n)$ we have $u\sim_G v$ implies $u\sim_k v.$ Thus
\[  \mbox{Card}(\mbox{Fact}_x(n)/\sim_G)\geq  \mbox{Card}(\mbox{Fact}_x(n)/\sim_k)\geq k+1=\epsilon(G)+1\]
as required. 
This concludes our proof of the first statement of Theorem~\ref{GMH1}. 

Next suppose that $p_{\omega, x}(n) =\epsilon(G_n)+1$ for each $n\geq 1.$ We will show that $x$ is binary and balanced. Since $x$ is already assumed aperiodic, it will follow that $x$ is Sturmian. Since $\epsilon(G_1)=1,$ and hence $p_{\omega, x}(1) =2,$ it follows that $x$ is on a binary alphabet which we can take to be $\{0,1\}.$

\begin{lemma} Let $x\in \{0,1\}^\nats$ be aperiodic. Then either $x$ is Sturmian or there exist a positive integer $n\geq 2,$ $u\in \{0,1\}^{n-2}$ and a Sturmian word $y$ such that $\mbox{Fact}_x(n)=\mbox{Fact}_y(n)\cup \{0u0, 1u1\}.$
\end{lemma}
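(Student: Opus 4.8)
The plan is to locate the first length at which the factor complexity of $x$ exceeds $n+1$, to show that at this length the defect is exactly the pair $\{0u0,1u1\}$ attached to the unique bispecial factor, and then to realize the ``corrected'' factor set as that of a genuine Sturmian word.

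First I would dispose of the trivial case: if $x$ is balanced then, being aperiodic and binary, it is Sturmian and there is nothing to prove, so I assume $x$ is not Sturmian. Since $x$ is aperiodic we have $p_x(m)\ge m+1$ for every $m$ (by the first part of Theorem~\ref{GMH1} applied to the trivial groups), and since $x$ is not Sturmian there is a least $n$ with $p_x(n)\ge n+2$; thus $p_x(m)=m+1$ for every $m\le n-1$, so $x$ has a unique right special factor of each length $\le n-2$. Let $u$ be the right special factor of length $n-2$. The increment $p_x(n)-p_x(n-1)=p_x(n)-n\ge 2$ equals the number of right special factors of length $n-1$, and each such factor has its length-$(n-2)$ suffix right special, hence equal to $u$; as the only candidates are $0u$ and $1u$, both must be right special and $p_x(n)=n+2$. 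Consequently all four words $0u0,0u1,1u0,1u1$ lie in $\mbox{Fact}_x(n)$, and $u$ is the unique bispecial factor of length $n-2$. In particular $0u0,1u1\in\mbox{Fact}_x(n)$, which exhibits the required unbalanced pair, and I fix $n$ and $u$ as above.

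The heart of the argument is then twofold. On one hand I must check that $x$ is balanced on factors of length $\le n-1$, so that $\mbox{Fact}_x(n-1)$ can coincide with the length-$(n-1)$ factor set of a Sturmian word. This is the local form of the Morse--Hedlund/Coven--Hedlund fact that $p_x(m)=m+1$ for all $m\le N$ forces balance up to length $N$: a minimal unbalanced length $m_0\le n-1$ would again produce, at the associated bispecial factor $u'$, a ``forbidden cross'' (one of $0u'1,1u'0$ absent while $0u'0,1u'1$ are present), and the resulting determinism — every occurrence of one of $0u',1u'$ having a forced continuation — propagates down the unique special-factor chains and forces $x$ to be eventually periodic, contradicting aperiodicity. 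I expect this propagation to be the main obstacle, and I would organize it as a descent (a desubstitution in the spirit of the Rauzy/continued-fraction rules) terminating in a visibly periodic base case.

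Finally I would construct $y$. Since $\mbox{Fact}_x(n-1)$ is a balanced set of $n$ words of length $n-1$ admitting $u$ as its unique bispecial factor, it is exactly the length-$(n-1)$ factor set of every Sturmian word whose slope lies in a suitable open interval $I$; this interval splits, at the slope that resolves the bispecial $u$, into two subintervals on which the length-$n$ factor set keeps both crosses $0u1,1u0$ together with, respectively, $0u0$ or $1u1$. Choosing a slope in the appropriate subinterval yields a Sturmian word $y$ with $\mbox{Fact}_y(n-1)=\mbox{Fact}_x(n-1)$ and $\mbox{Fact}_y(n)=\bigl(\mbox{Fact}_x(n)\setminus\{0u0,1u1\}\bigr)\cup\{a\}$, where $a$ is the retained same-letter extension. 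Then $\mbox{Fact}_x(n)=\mbox{Fact}_y(n)\cup\{0u0,1u1\}$, as required; the matching at all shorter lengths is automatic, since every factor of $x$ of length $<n$ already occurs inside a factor shared with $y$.
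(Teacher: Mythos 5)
Your opening bookkeeping is correct and is a genuinely different entry point from the paper's: taking $n$ to be the first complexity excess, the increment $p_x(n)-p_x(n-1)$ on a binary alphabet equals the number of right special factors of length $n-1$, each of which must end in the unique right special factor $u$ of length $n-2$, so both $0u$ and $1u$ are right special and $p_x(n)=n+2$. The paper instead takes the least $n$ at which $\mbox{Fact}_x(n)$ differs from the length-$n$ factor set of \emph{every} Sturmian word; that choice hands it, by minimality alone, a Sturmian word $y$ with $\mbox{Fact}_x(n-1)=\mbox{Fact}_y(n-1)$, with no balance argument needed, and concentrates all the remaining work in one periodicity argument which the paper actually carries out in full.

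That contrast exposes the genuine gap in your proposal: the two steps you call ``the heart of the argument'' are announced, not proven. Your claim that a minimal unbalanced length $m_0\le n-1$ forces eventual periodicity is precisely where the lemma's entire difficulty is concentrated --- indeed, after the fact your $n$ coincides with the paper's, so ``complexity $m+1$ up to $n-1$ implies balance up to $n-1$'' carries essentially the same weight as the lemma itself and cannot be dismissed as a known ``local Coven--Hedlund fact.'' You explicitly defer it (``I expect this propagation to be the main obstacle \dots I would organize it as a descent''), but no descent is performed, and the tool the paper uses to execute the analogous propagation is unavailable at that point of your outline: the paper concludes $b=1$ for the longest right special prefix $1v$ of $1u$ precisely because the factors of $x$ of length at most $n-1$ coincide with those of the Sturmian $y$, hence are closed under reversal with left special factors the reversals of right special ones; then $1v1\models_x 1u1$ with $1v1$ a proper suffix forces periodicity. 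In your setting, balance --- and with it any Sturmian matching, reversal closure, or even recurrence of $x$ --- is exactly what has not yet been established, so the forced-continuation relation does not obviously propagate, and the desubstitution machinery you gesture at would be a multi-lemma argument in its own right. The final construction of $y$ likewise rests on an unproved (though true) assertion, that a balanced factor set of size $n$ with complexity profile $m+1$ and unique bispecial $u$ is the length-$(n-1)$ factor set of the Sturmian words whose slopes fill an interval splitting at the resolution of $u$; the paper never needs this in such generality because its minimality gives $y$ outright. As written, then, the proposal is a credible plan whose decisive steps are IOUs rather than proofs.
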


For convenience, we will make us of the following notation: Given $u$ and $v$ factors of $x$ with $u$ a prefix of $v,$ we write $u\models_xv$ to mean that each occurrence of $u$ in $x$ is an occurrence of $v.$ Clearly, if $u\models_xv$ and $u$ is both a proper prefix and a proper suffix of $v,$ then $x$ is ultimately periodic. 

\begin{proof} Suppose $x$ is not Sturmian. Then there exists a least positive integer $n\geq 2$ such that 
for all Sturmian words $z$ we have $\mbox{Fact}_x(n)\neq \mbox{Fact}_z(n).$ By minimality of $n$ there exists a Sturmian word $y$ such that $\mbox{Fact}_x(n-1)= \mbox{Fact}_y(n-1).$ It follows that there exists a factor $u\in \mbox{Fact}_x(n-2)=\mbox{Fact}_y(n-2)$ which is bispecial in both $x$ and $y.$  In fact, let $u$ be the unique right special factor of $x$ and  $y$ of length $n-2.$ If $u$ is not left special, then there exists a unique factor 
$v\in \mbox{Fact}_x(n-1)=\mbox{Fact}_y(n-1)$  ending in $u,$ and this factor would necessarily be right special in both $x$ and $y.$ Moreover all other factors of $x$ and $y$ of length $n-1$ admit a unique extension to a factor of length $n$ determined by  their suffix of length $n-2.$ Hence we would have
$\mbox{Fact}_x(n)= \mbox{Fact}_y(n)$ contrary to the choice of $n.$ Thus $u$ is also left special (in both $x$ and $y)$ and hence bispecial.

Since $x$ is aperiodic, at least one of $0u$ or $1u$ is right special in $x.$ Without loss of generality we may assume $0u$ is right special. We now claim that $1u$ must also be right special.   In fact, suppose to the contrary that $1u\models_x1ua$ for some $a\in \{0,1\}.$ If $a=0,$ then $\mbox{Fact}_x(n)$ would coincide with the set of factors of length $n$ of some Sturmian word, contrary to our choice of $n.$ Thus $a=1.$ We will show that this implies that $x$ is ultimately periodic, and hence gives rise to a contradiction. We consider two cases: First suppose no non-empty  prefix of $1u$ is right special; in this case  $1\models_x1u\models_x1u1$ whence $x$ is ultimately periodic. Thus  we may assume that some prefix $1v$ of $1u$ is right special. Consider the longest such right special prefix  $1v.$ Since we are assuming that $1u$ is not right special, it follows that $1vb$ is a prefix of $1u$ for some $b\in \{0,1\}.$  Since $vb$ is left special (as $vb$ is a prefix of $u)$, and since $1v$ is right special, we deduce that $vb$ is equal to the reverse of $1v$ from which it follows that  $b=1.$ Thus as $1v$ is a suffix of $1u,$ we have  $1v1$ is a suffix of $1u1.$ Now since  $1v1\models_x1u\models_x1u1$ and $1v1$ is a proper suffix of $1u1,$ it follows that $x$ is ultimately periodic. Thus we have shown that both $0u$ and $1u$ are right special. Since in $y$ exactly one of $0u$ and $1u$ is right special, the result follows. \end{proof}

Returning to the proof of Theorem~\ref{GMH1}, let us suppose that $p_{\omega, x}(n) =\epsilon(G_n)+1$ for each $n\geq 1$ and that $x$ is not Sturmian. By the previous lemma there exist a positive integer $n\geq 2,$ $u\in \{0,1\}^{n-2}$ and a Sturmian word $y$ such that $\mbox{Fact}_x(n)=\mbox{Fact}_y(n)\cup \{0u0, 1u1\}.$ Since $y$ is Sturmian, at most one of $ \{0u0, 1u1\}$ is a factor of $y.$
Thus by the first part of Theorem~\ref{GMH1}  applied to the aperiodic word $y,$ we deduce that $p_{\omega, x}(n)\geq p_{\omega, y}(n) +1\geq \epsilon(G_n)+2,$ a contradiction. This concludes our proof of Theorem~\ref{GMH1}\end{proof}

We next establish various lemmas leading up to the proof of Theorem~\ref{GMH2}. 
As is well known, every finite Abelian group $G$ can be written multiplicatively as a direct product of cyclic groups $\ints /m_1\ints\times \ints /m_2\ints\times \cdots \times \ints /m_k\ints$ where the $m_i$ are  prime powers.
The unordered sequence $(m_1,m_2,\ldots ,m_k)$ completely determines $G$ up to isomorphism and any symmetric function of the $m_i$  is an isomorphic invariant of $G.$ We consider the {\it trace} of $G$ given by 
$T(G)=m_1+m_2+\cdots +m_k,$ and recall the following result from \cite{Hoff}.

\begin{proposition}\label{trace} If an Abelian group $G$ is embedded in $S_n,$ then $T(G)\leq n.$ 
\end{proposition}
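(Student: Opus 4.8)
The plan is to prove that if an Abelian group $G = \ints/m_1\ints \times \cdots \times \ints/m_k\ints$ (with each $m_i$ a prime power) embeds in $S_n$, then $T(G) = m_1 + \cdots + m_k \leq n$. The natural strategy is to analyze how $G$ acts on $\{1,2,\ldots,n\}$ and to bound the total number $n$ of points from below by the trace. First I would decompose the action of $G$ on $\{1,2,\ldots,n\}$ into its orbits $O_1, O_2, \ldots, O_r$, so that $n = \sum_{s=1}^r |O_s|$. On each orbit $O_s$, the action of $G$ is transitive, and by the orbit-stabilizer theorem $|O_s| = [G : \mathrm{Stab}(p)]$ for any point $p \in O_s$. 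Thus each orbit size is the index of a subgroup of $G$, and understanding these indices for Abelian groups is the crux.

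The key reduction is the following: for an Abelian group $G$, I would argue that among its orbits there must be at least $k$ orbits whose sizes, taken together, account for the full trace $m_1 + \cdots + m_k$. The cleanest route uses the structure of subgroups of Abelian groups. Since $G \cong \prod_{i=1}^k \ints/m_i\ints$ with the $m_i$ prime powers $p_i^{e_i}$, I would group the factors by prime and examine, for each prime $p$, the elements of order exactly $p$ or the $p$-primary part. The essential point is that a faithful action of $G$ (which an embedding in $S_n$ provides) forces the orbits to be "large enough." Concretely, for a \emph{transitive} action of $G$ on a single orbit $O$, the quotient $G/\mathrm{Stab}(p)$ acts regularly, but faithfulness of the whole action is a global condition, so I would instead reason that the embedding decomposes $G$ into a subdirect product of the quotients acting on each orbit.

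The approach I favor is induction on $k$ combined with choosing a well-placed orbit. For the cyclic case $k=1$, $G = \ints/p^e\ints$ embedded in $S_n$ must have an element of order $p^e$, which as a permutation is a product of cycles whose lengths are powers of $p$ dividing $p^e$, at least one of which equals $p^e$; that cycle alone moves $p^e$ points, giving $n \geq p^e = T(G)$. For the inductive step, I would locate one cyclic factor, say $\ints/m_k\ints$, find an orbit $O$ on which this factor (or rather a suitable element realizing order $m_k$) acts with a cycle of full length $m_k$, peel off those $m_k$ points, and apply the inductive hypothesis to the complementary factor $\prod_{i<k}\ints/m_i\ints$ acting on the remaining $n - m_k$ points. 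The delicate bookkeeping is ensuring the remaining group still embeds faithfully in the symmetric group on the leftover points so that the hypothesis applies.

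The main obstacle I anticipate is precisely this last point: after removing a single orbit (or a single element's long cycle), the \emph{rest} of the group need not act faithfully on the leftover points, so a naive induction breaks. Resolving this requires a more careful invariant than simply peeling off one orbit; the right idea is likely to assign to each prime-power cyclic factor $\ints/m_i\ints$ a \emph{distinct} orbit that absorbs at least $m_i$ points, using the fact that elements of coprime orders act on disjoint orbit-structure in a controlled way, or alternatively to pass to the $p$-primary components and handle each prime separately before summing. I expect the cleanest proof either invokes a known formula for the minimal faithful permutation degree of an Abelian group (the sum of prime-power orders, which is exactly $T(G)$) or reconstructs that bound directly via the cycle-type argument above, and identifying the correct choice of orbits to match factors to disjoint point-sets is where the real work lies.
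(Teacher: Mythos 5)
Your proposal is a strategy sketch rather than a proof, and the gap you yourself flag at the end is precisely the one that matters; it is never resolved. The base case is fine: a permutation of order $p^e$ has cycle lengths whose least common multiple is $p^e$, hence all cycle lengths are powers of $p$ and at least one equals $p^e$, giving $n\geq p^e.$ But the inductive step fails for the reason you state: after peeling off the $m_k$ points of a full-length cycle of an element realizing the factor $\ints/m_k\ints,$ the complementary factor $\prod_{i<k}\ints/m_i\ints$ need not act faithfully on the remaining $n-m_k$ points, so the inductive hypothesis cannot be applied. Neither of your suggested escapes closes this. Invoking ``the known formula for the minimal faithful permutation degree of an Abelian group'' is circular: the inequality $T(G)\leq n$ \emph{is} that formula (the nontrivial direction of Hoffman's theorem; note that the paper itself offers no proof of the proposition but simply cites \cite{Hoff} for it, so there is no in-paper argument to fall back on). And ``assigning to each factor $\ints/m_i\ints$ a distinct orbit absorbing $m_i$ points'' is asserted, not carried out; it is genuinely delicate because several cyclic factors can be supported on a single $G$-orbit, and the full-length cycle of your chosen element has as its support an orbit of the cyclic subgroup it generates, which may sit inside a strictly larger $G$-orbit shared with other factors.

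The standard repair (essentially Hoffman's argument) replaces the peeling induction by a quotient argument, and this is the idea missing from your outline. Decompose $\{1,\ldots,n\}$ into $G$-orbits $O_1,\ldots,O_r$ and let $G_s$ denote the image of $G$ in the symmetric group on $O_s.$ A transitive Abelian permutation group acts regularly, so $|O_s|=|G_s|.$ Faithfulness says the kernels of the projections $G\rightarrow G_s$ intersect trivially, so $G$ embeds in $\prod_s G_s$: one never restricts the group to a subset of points (which is what destroyed faithfulness in your induction), one projects it onto quotients. It then remains to verify two purely group-theoretic facts: (i) $T$ is additive on direct products and monotone on subgroups of Abelian groups --- the latter reduces to the statement that the invariants of a subgroup of an Abelian $p$-group are dominated termwise by those of the ambient group; and (ii) $T(H)\leq |H|$ for every finite Abelian $H,$ since a sum of prime powers, each at least $2,$ is at most their product. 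Chaining these gives $T(G)\leq T\bigl(\prod_s G_s\bigr)=\sum_s T(G_s)\leq \sum_s |G_s|=\sum_s |O_s|=n.$ Without step (i) --- or some substitute for it --- your outline does not yield a proof.
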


A partition $\{E_1,E_2,\ldots ,E_k\}$ of $\{1,2,\ldots ,n\}$ is called an {\it interval partition} if  for each $1\leq r<s\leq n,$ we have $r,s\in E_i \Rightarrow t\in E_i$ for all $r\leq t \leq  s.$

\begin{lemma}\label{intpart}Let  $\{E_1,E_2,\ldots ,E_k\}$ be an interval partition of $\{1,2,\ldots ,n\}$ ordered so that
$i<j\Rightarrow \max E_i<\max E_j.$ For each $1\leq j\leq k,$ let $\sim_j$ denote the equivalence relation on $\A^n$ defined by $u\sim_j v$ if and only if $u|_{E_i}\sim_{ab} v|_{E_i}$ for each $1\leq i \leq j.$ Then for each Sturmian word $x\in \{0,1\}^\nats$  we have $\mbox{Card}(\mbox{Fact}_x(n)/\sim_{j})= j+1$  for each $1\leq j\leq k.$
\end{lemma}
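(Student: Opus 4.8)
The lower bound is immediate: a Sturmian word is aperiodic and an interval partition is in particular a partition, so Lemma~\ref{part} applies and gives $\mbox{Card}(\mbox{Fact}_x(n)/\sim_j)\ge j+1$ for each $1\le j\le k$. Hence the whole content of the statement is the reverse inequality $\mbox{Card}(\mbox{Fact}_x(n)/\sim_j)\le j+1$, and it is here that I would exploit both that $x$ is Sturmian and that the blocks are intervals.

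The plan is to reduce the count to prefix statistics and then to the rotation coding. Set $m_i=\max E_i$, so $m_1<\cdots<m_k=n$; because the partition consists of intervals ordered by their maxima, $E_1\cup\cdots\cup E_i=\{1,\ldots,m_i\}$ for each $i$. Over the binary alphabet $\{0,1\}$, Abelian equivalence on a block means equal number of $1$'s, so $u\sim_j v$ holds exactly when $u$ and $v$ share the prefix counts $S_i(u):=|u_1\cdots u_{m_i}|_1$ for all $1\le i\le j$. Thus $\mbox{Card}(\mbox{Fact}_x(n)/\sim_j)$ is the number of distinct tuples $\bigl(S_1(u),\ldots,S_j(u)\bigr)$ with $u\in\mbox{Fact}_x(n)$. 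Writing $x$ as a coding of the rotation by its irrational slope $\alpha\in(0,1)$, each factor $u$ corresponds to a phase $\theta\in[0,1)$, and a short computation with floors (using $x_t=\lfloor(t+1)\alpha+\rho\rfloor-\lfloor t\alpha+\rho\rfloor$ and telescoping) yields $S_i(u)=\lfloor\theta+m_i\alpha\rfloor$. Hence every such tuple is a value of the map $\Phi\colon[0,1)\to\ints^j$, $\theta\mapsto\bigl(\lfloor\theta+m_1\alpha\rfloor,\ldots,\lfloor\theta+m_j\alpha\rfloor\bigr)$, and it suffices to show $\Phi$ has at most $j+1$ values.

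Finally I would analyze $\Phi$ coordinatewise. On $[0,1)$ the $i$-th coordinate $\theta\mapsto\lfloor\theta+m_i\alpha\rfloor$ is nondecreasing and its argument sweeps an interval of length exactly $1$, so it is a step function jumping by $1$ at the unique point $\theta_i=\lceil m_i\alpha\rceil-m_i\alpha\in(0,1)$ and constant elsewhere. Therefore $\Phi$ is constant on each subinterval of $[0,1)$ cut out by the break points $\theta_1,\ldots,\theta_j$ and takes different values on different subintervals, so its image has size one more than the number of \emph{distinct} $\theta_i$. The decisive point is that these break points are pairwise distinct: $\theta_i=\theta_{i'}$ would force $(m_i-m_{i'})\alpha\in\ints$ with $0<|m_i-m_{i'}|<n$, impossible since $\alpha$ is irrational. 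Consequently $\Phi$ takes at most $j+1$ values, which yields $\mbox{Card}(\mbox{Fact}_x(n)/\sim_j)\le j+1$; together with the lower bound this gives equality for every $1\le j\le k$. I expect the only genuine obstacle to be the clean verification of the identity $S_i(u)=\lfloor\theta+m_i\alpha\rfloor$ and of the distinctness of the $j$ break points — the remaining step, that a coordinatewise monotone step function with $j$ distinct breaks has exactly $j+1$ level sets, is elementary.
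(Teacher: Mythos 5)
Your proof is correct, but it takes a genuinely different route from the paper's. Both arguments obtain the lower bound from Lemma~\ref{part}; the difference is in the upper bound. The paper stays entirely inside combinatorics on words: since a Sturmian word has exactly two Abelian classes in each length, each $\sim_j$-class splits into at most two $\sim_{j+1}$-classes, and the paper shows that at most one class can split at each refinement step --- two simultaneous splits would, after choosing representatives rich in $0$ or in $1$ on the relevant blocks, produce factors $u$ and $v'$ with $u|_{E_i\cup\cdots\cup E_{j+1}}$ containing two more $0$'s than $v'|_{E_i\cup\cdots\cup E_{j+1}}$, contradicting balancedness. You instead encode a $\sim_j$-class by the prefix counts $(S_1(u),\ldots,S_j(u))$ --- which is where you use that the blocks are intervals ordered by maxima, just as the paper exploits interval structure when it forms the union $E_i\cup\cdots\cup E_{j+1}$ --- and then pass to the rotation representation, counting level sets of $\theta\mapsto\bigl(\lfloor\theta+m_1\alpha\rfloor,\ldots,\lfloor\theta+m_j\alpha\rfloor\bigr)$ on $[0,1)$. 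Your telescoping identity $S_i(u)=\lfloor\theta+m_i\alpha\rfloor$, the location of the unique breakpoint $\theta_i=\lceil m_i\alpha\rceil-m_i\alpha$, and the distinctness of the breakpoints via irrationality of $\alpha$ are all correct, and since every factor occurs at some position and hence at some phase, the image bound does give $\mbox{Card}(\mbox{Fact}_x(n)/\sim_j)\leq j+1$. What the paper's route buys is self-containedness: it uses only the definition of Sturmian as aperiodic, binary and balanced. What yours buys is a transparent geometric picture (the $j+1$ classes are exactly the cells cut out by $j$ distinct breakpoints; with equidistribution of the orbit you could even get equality directly, bypassing Lemma~\ref{part}), at the cost of invoking the classical Morse--Hedlund equivalence between Sturmian words and codings of irrational rotations. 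One small point to tidy: your formula $x_t=\lfloor(t+1)\alpha+\rho\rfloor-\lfloor t\alpha+\rho\rfloor$ covers lower mechanical words only; for upper mechanical words the same computation runs with ceilings, or you can simply note that the factor set of a Sturmian word depends only on its slope $\alpha$, so you may assume $x$ is lower mechanical.
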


\begin{proof} Let $x\in \{0,1\}^\nats$ be a Sturmian word. In view of  Lemma~\ref{part} it suffices to show that $\mbox{Card}(\mbox{Fact}_x(n)/\sim_{j})\leq  j+1$  for each $1\leq j\leq k.$ Since $x$ is Sturmian, there are exactly two Abelian classes of factors of $x$ of each length $m,$ thus $\mbox{Card}(\mbox{Fact}_x(n)/\sim_{1})=  2.$ It also follows from this that for  $1\leq j\leq k-1,$ each $\sim_j$ class splits into at most two $\sim_{j+1}$ classes. So it suffices to show that for each  $1\leq j\leq k-1,$ at most one $\sim_j$ class splits under $\sim_{j+1}.$ So fix $1\leq j \leq k-1,$ and suppose to the contrary that two distinct $\sim_j$ classes split under $\sim_{j+1}.$  Then, there exist $u,u',v,v'\in \mbox{Fact}_x(n)$ such that $u\sim_j u', \,v\sim_jv',\,u\nsim_j v,\,u\nsim_{j+1}u'$ and $v\nsim_{j+1}v'.$  Exchanging if necessary  $u$ and $u'$ and/or $v$ and $v',$  we may assume $u|_{E_{j+1}}$ and $v|_{E_{j+1}}$  are rich in $0$ while $u'|_{E_{j+1}}$ and $v'|_{E_{j+1}}$ are rich in $1.$ 
Since $u\nsim_j v,$ there exists a largest integer $1\leq i\leq j$ such that $u|_{E_i}\nsim_{ab}v|_{E_i}.$
Exchanging if necessary $u$ and $v$ and $u'$ and $v',$ we may assume that $u|_{E_i}$ is rich in $0$ and $v|_{E_i}$ is rich in $1.$ 
Since $v|_{E_i}\sim_{ab}v'|_{E_i},$ we have that
$u|_{E_i\cup \cdots \cup E_{j+1}}$ has two more occurrences of $0$ than $v'|_{E_i\cup \cdots \cup E_{j+1}},$ contradicting that $x$ is balanced. \end{proof}

In the next lemma we consider a discrete $3$-interval exchange transformation $(a,b,c)$ defined on the set  $\{1,2,\ldots ,n\}$ (where $n=a+b+c)$ in which the numbers $1,2,\ldots ,n$ are divided into three subintervals of length $c,b$ and $a$ respectively which are then rearranged in the order $a,b,c.$ In other words 
\[1,2,\ldots ,n \mapsto a + b + 1, a + b + 2,\ldots , n,  a + 1, a + 2,\ldots , a + b, 1, 2,\ldots ,a.\]
This is also called an $abc$-permutation in \cite{Pak}. We also include here the degenerate case in which one of $a,b$ or $c$ equals $0.$ 
The following proposition asserts that for each Sturmian word $x$ and for each positive integer $m,$ there exists a $m$-cycle corresponding to a discrete $3$-interval exchange transformation which identifies all factors of $x$ of length $m$ belonging to the same Abelian class.

\begin{lemma}\label{array cycle}Let $x\in \{0,1\}^\nats$ be a Sturmian word. Then for each positive integer $m$ there exists 
a discrete $3$-interval exchange transformation $(a,b,c)$ on $\{1,2,\ldots ,m\}$ given by a $m$-cycle $\sigma$ such that the action of $\langle \sigma \rangle$ on $\mbox{Fact}_x(m)$ is Abelian transitive. \end{lemma}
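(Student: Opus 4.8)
The plan is to realize $x$ through its standard rotation coding and to read the positions $1,\dots,m$ of a length-$m$ factor as the points $P_j=\{(j-1)\alpha\}$ of the rotation orbit on the circle $\reals/\ints$, where $\alpha\in(0,1)$ is the irrational slope of $x$. Fix the coding so that $x_i=\mathbf 1_{I}(\{\rho+i\alpha\})$ for a half-open arc $I$ of length $\alpha$. Then the factor occurring at position $i$ has a $1$ in its $j$-th coordinate exactly when $P_j$ lies in the arc $I-\{\rho+i\alpha\}$, which is again an arc of length $\alpha$. As the starting index $i$ varies this arc sweeps once around the circle and produces the $m+1$ distinct factors; the crucial observation is that the set of one-positions of any factor is precisely the set of $P_j$ falling inside an arc of length $\alpha$, hence a block of cyclically consecutive points once the positions are listed in their circular order around the circle.

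First I would sort the positions by circular position: let $c_1,\dots,c_m$ be the listing of $\{1,\dots,m\}$ with $P_{c_1}<P_{c_2}<\cdots<P_{c_m}$, and define $\sigma$ to be the successor map $c_i\mapsto c_{i+1}$ with indices taken mod $m$. Being the successor permutation of $m$ distinct points on a circle, $\sigma$ is automatically a single $m$-cycle. Moreover $(\sigma* u)_p=u_{\sigma^{-1}(p)}$, so the one-set of $\sigma* u$ is the image under $\sigma$ of the one-set of $u$; thus $\sigma$ shifts each cyclic block of consecutive points one step forward.

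The heart of the argument, and the step I expect to be the main obstacle, is to check that, written in the original labels $\{1,\dots,m\}$, this successor map $\sigma$ is exactly a discrete $3$-interval exchange $(a,b,c)$ in the sense defined above. This is the content of the three-distance (three-gap) theorem applied to the points $\{0,\alpha,\dots,(m-1)\alpha\}$: the circular successor of $P_j$ is obtained by adding to $j$ one of at most two fixed return times, and the indices for which each return time applies form consecutive blocks. Hence $\sigma$ acts as translation by one value on an initial block of indices and by another value on the complementary block(s), which is precisely the block form of an $(a,b,c)$-permutation, degenerating to a $2$-interval exchange (one of $a,b,c$ equal to $0$) when only two gap lengths occur. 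The real work here is to match the orientation and the exact block decomposition furnished by the three-gap theorem with the paper's convention for the $(a,b,c)$-permutation, and to read off $a,b,c$ from the number of points of each gap type (equivalently from the relevant continued-fraction convergents of $\alpha$).

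Finally I would conclude Abelian transitivity. Since $\sigma$ merely permutes positions, $\sigma^j* u\sim_{ab}u$ for all $j$, which gives $u\sim_{\langle\sigma\rangle}v\Rightarrow u\sim_{ab}v$ for free. For the converse, recall that $x$ is Sturmian, so the factors of length $m$ fall into exactly two Abelian classes, those with $k$ and with $k+1$ ones, where $k=\lfloor m\alpha\rfloor$. By the sweeping-arc observation each factor is a cyclic block of consecutive points of some fixed size $r\in\{k,k+1\}$, and by the previous paragraph $\sigma$ cycles the $m$ cyclic blocks of a given size $r$ through a single orbit of size $m$ (a contiguous block has no nontrivial rotational symmetry for $0<r<m$). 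Hence any two factors lying in the same Abelian class are cyclic blocks of the same size and therefore lie in one $\langle\sigma\rangle$-orbit, so $u\sim_{ab}v\Rightarrow u\sim_{\langle\sigma\rangle}v$. Combining the two implications shows that $\sim_{\langle\sigma\rangle}$ and $\sim_{ab}$ coincide on $\mbox{Fact}_x(m)$, which is exactly the assertion that the action of $\langle\sigma\rangle$ on $\mbox{Fact}_x(m)$ is Abelian transitive.
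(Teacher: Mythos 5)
Your proof is correct, but it takes a genuinely different route from the paper's. The paper argues combinatorially: it brackets $m$ between consecutive bispecial factors $w'$, $w$ of $x$ (so $|w'|+2 < m \le |w|+2$), takes $p,q$ to be the Fine--Wilf periods of the central word $w$ (computed as modular inverses of $r=|0w1|_1$ and $s=|0w1|_0$), \emph{defines} $\sigma$ outright as the $abc$-permutation with $(a,b,c)=(m-p,\,p+q-m,\,m-q)$, invokes Pak's lemma (via $\gcd(a+b,b+c)=\gcd(q,p)=1$) to get the $m$-cycle property, and then spends its effort on the action: using the lexicographic Christoffel array of $0w1$ and the Borel--Reutenauer fact that lexicographically consecutive conjugates have the form $X01Y$, $X10Y$, it shows that lexicographically consecutive factors $u<v$ of length $m$ in the same Abelian class satisfy $v=\sigma * u$ (full transitivity within a class then follows because balance forces each Abelian class to be an interval in lexicographic order). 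You instead work dynamically: coding $x$ by an irrational rotation, you observe that the one-set of each length-$m$ factor is the trace of an arc of length $\alpha$ on the orbit points $P_1,\dots,P_m$, hence a cyclic block in their circular order, and you take $\sigma$ to be the circular successor map. This reverses where the work lies: you get the $m$-cycle property for free (no Pak's lemma needed), and your Abelian-transitivity argument is cleaner and more conceptual than the paper's lexicographic chaining --- any two blocks of equal size $r$ with $0<r<m$ lie in a single $\langle\sigma\rangle$-orbit of subsets, and you rightly do not require intermediate words in the orbit to be factors --- but the cost is shifted to the step you flag as the main obstacle, namely that the successor permutation is an $abc$-permutation in the original labels. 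That step is indeed delivered by the three-distance (Slater) successor structure, and it comes out in exactly the paper's form: the index shift is one period on an initial block, minus the other period on a final block, and their difference on the middle block, with block lengths $(c,b,a)$ and $\{a+b,\,b+c\}=\{p,q\}$ --- the same Fine--Wilf data the paper uses, so the two constructions yield essentially the same permutation up to orientation and relabeling. One small inaccuracy to fix when writing this up: you say the successor adds ``one of at most two fixed return times,'' whereas in general there are \emph{three} index shifts (the two basic return times and their combination on the middle block), though your hedged ``block(s)'' phrasing and the degenerate case you mention absorb this once the bookkeeping is done. In sum: the paper buys the interval-exchange form by construction and pays in Christoffel-array combinatorics; you buy the cycle structure and Abelian transitivity geometrically and pay with the three-gap bookkeeping.
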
 

\begin{proof} The result is immediate in case $m=1,2,$ or $3.$ In fact, in this case we may take $\sigma=\mbox{id}, $  $(1,2),$ or $(1,2,3)$ respectively. Thus we assume $m\geq 4.$ Let $w$ and $w'$  be two consecutive bispecial factor of $x$ such that $|w'|+2<m\leq |w|+2.$ Let $r$ and $s$ denote the number of occurrences of $1$ and $0$ in $0w 1,$ i.e.,  $r=|0w 1|_1,$ $s=|0w 1|_0,$ so that $r+s=|w|+2.$ Set $p=r^{-1}\bmod {(r+s)}$ and $q=s^{-1}\bmod {(r+s)}.$ Then  $p$ and $q$ are the relatively prime Fine and Wilf periods of the central Sturmian word $w$ (see Proposition 2.1 in \cite{BLR}). Set $a=m-p,$ $b=p+q-m,$ $c=m-q$  and let $\sigma \in S_m$ denote the corresponding
$abc$-permutation. We note that $|w'|+2=\max\{p,q\},$ whence $a$ and $c$ are both positive while $b\geq 0.$ Since $\gcd (a+b,b+c)=\gcd(q,p)=1,$ it follows from Lemma~1 of \cite{Pak} that $\sigma$ is a $m$-cycle. 

Now let $u$ and $v$ be two lexicographically consecutive factors of $x$ of length $m$ with $u<v.$ Assume further that $u$ and $v$ are in the same Abelian class. We will show that $v=\sigma * u.$ 
We consider the lexicographic Christoffel array  $\mathcal{C}_{r,s}$ in which the cyclic conjugates of $0w1$ are ordered lexicographically  in a rectangular array (see \cite{JeZa}). 
For instance, if $w=010010,$ the corresponding Christoffel array  $\mathcal{C}_{3,5}$ is shown in Figure \ref{fig:matrix}. 
Let $U$ and $V$ be two lexicographically consecutive factors of $x$ of length $|w|+2$ with $u$  a prefix of $U$ and $v$ a prefix of $V.$ We recall that $U$ and $V$ differ in exactly two positions, more precisely we can write $U=X01Y$ and $V=X10Y$ for some $X,Y\in \{0,1\}^*$ (see Corollary 5.1 in \cite{BoRe}).
Writing $U=CBAB'$ where $|A|=a,$ $|B|=|B'|=b$ and $|C|=c.$ 
By Theorem C in \cite{JeZa} we have that $V$ is obtained by $U$ by a cyclic shift corresponding to $q,$ i.e., $V=AB'CB.$ In case $m=|w|+2$ so that $u=U$ and $v=V,$ then $B$ and $B'$ are empty and we see that $V=\sigma * U.$ Otherwise, in case $m<|w|+2,$ since $u$ and $v$ are distinct and belong to the same Abelian class, we have that $X01$ is a prefix of $u=CBA$ which in turn implies that $B=B'.$ Whence $U=CBAB$ and $V=ABCB$ and hence $u=CBA$ and $v=ABC$ and $v=\sigma * u$ as required. \end{proof}

\begin{figure}

\begin{center}
\newcolumntype{C}{>{\centering\arraybackslash}b{.1mm}<{}} $$
\mathcal{C}_{3,5}=
\left(
{\begin{tabular}{*{10}{C}}
 0 & 0 & 1 & 0 & 0 & 1 & 0 & 1\\
 0 & 0 & 1 & 0 & 1 & 0 & 0 & 1\\
 0 & 1 & 0 & 0 & 1 & 0 & 0 & 1\\
 0 & 1 & 0 & 0 & 1 & 0 & 1 & 0\\
 0 & 1 & 0 & 1 & 0 & 0 & 1 & 0\\
 1 & 0 & 0 & 1 & 0 & 0 & 1 & 0\\
 1 & 0 & 0 & 1 & 0 & 1 & 0 & 0\\
 1 & 0 & 1 & 0 & 0 & 1 & 0 & 0
\end{tabular}}
\hspace{1.5mm}\right) $$
\end{center}
\caption{The Christoffel array $ \mathcal{C}_{5,3}$.
\label{fig:matrix}}
\end{figure}

\noindent As an immediate consequence of Lemma~\ref{array cycle} we have

\begin{corollary}Let $x\in \{0,1\}^\nats$ be a Sturmian word. Then for each positive integer $n$ there exists a cyclic group $G_n$ generated by an $n$-cycle such that $\mbox{Card}(\mbox{Fact}_x(n)/\sim_{G_n})=2.$
\end{corollary}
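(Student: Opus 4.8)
The plan is to deduce the corollary directly from Lemma~\ref{array cycle} applied to the case $m=n$; the statement is genuinely an immediate corollary, so the proof is essentially an unwinding of the definition of Abelian transitivity together with one standard counting fact about Sturmian words. First I would invoke Lemma~\ref{array cycle} to produce an $n$-cycle $\sigma\in S_n$ (arising from a discrete $3$-interval exchange transformation $(a,b,c)$ with $a+b+c=n$) whose generated cyclic group $G_n=\langle\sigma\rangle$ acts Abelian transitively on $\mbox{Fact}_x(n)$. This cyclic group, generated by a single $n$-cycle, is exactly the $G_n$ required by the corollary, so no further construction is needed.

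Next I would unwind the definition of Abelian transitivity: by hypothesis, for all $u,v\in\mbox{Fact}_x(n)$ we have $u\sim_{ab}v\Leftrightarrow u\sim_{G_n}v$. Consequently the partition of $\mbox{Fact}_x(n)$ into $\sim_{G_n}$ equivalence classes coincides exactly with its partition into Abelian ($\sim_{ab}$) classes, and therefore $\mbox{Card}(\mbox{Fact}_x(n)/\sim_{G_n})$ equals the number of distinct Abelian classes of factors of $x$ of length $n$. It thus suffices to count the latter.

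Finally I would recall the standard fact (already used in the proof of Lemma~\ref{intpart}) that a Sturmian word, being binary, aperiodic and balanced, has exactly two Abelian classes of factors of each length $n$: balancedness forces the number of occurrences of the letter $1$ among factors of length $n$ to take at most two consecutive values, while aperiodicity rules out its taking only a single value. Combining this with the previous step yields $\mbox{Card}(\mbox{Fact}_x(n)/\sim_{G_n})=2$, as required.

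I do not anticipate any real obstacle here, precisely because the combinatorial substance has been isolated in Lemma~\ref{array cycle}. The only point that merits an explicit sentence rather than being left implicit is the count of exactly two Abelian classes of factors of each length; I would state this count plainly (citing balancedness and aperiodicity) so that the passage from $\sim_{G_n}$-classes to Abelian classes to the number $2$ is fully transparent.
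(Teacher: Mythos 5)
Your proposal is correct and is exactly the argument the paper intends: the corollary is stated there as an immediate consequence of Lemma~\ref{array cycle} with $m=n$, the Abelian transitivity of $\langle\sigma\rangle$ identifying the $\sim_{G_n}$ classes with the Abelian classes, of which a Sturmian word has exactly two per length (balancedness bounds the count of $1$'s to two consecutive values, and a single value would force $x_{i+n}=x_i$ for all $i$, contradicting aperiodicity). Your explicit spelling out of the two-Abelian-class count is a reasonable addition to what the paper leaves implicit.
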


\noindent In contrast, if we set $G_n=\langle (1,2,\ldots ,n)\rangle,$  then $\limsup_{n\rightarrow \infty} \mbox{Card}(\mbox{Fact}_x(n)/\sim_{G_n})=+\infty$ (see Theorem~1 of \cite{CFSZ}),  while  $\liminf_{n\rightarrow \infty} \mbox{Card}(\mbox{Fact}_x(n)/\sim_{G_n})=2$ (see  Lemma~9 of \cite{CFSZ}).  

\noindent As another consequence of Lemma~\ref{array cycle} we have:

\begin{lemma}\label{multi array} Let $x\in \{0,1\}^\nats$ be a Sturmian word. Let $\{E_1,E_2,\ldots ,E_k\}$ be an interval partition of $\{1,2,\ldots ,n\},$ and put $m_i=\mbox{Card}(E_i).$  Then there exist cycles $\sigma_1,\sigma_2, \ldots ,\sigma_k$ such that for each $1\leq i\leq k$ we have  $\sigma_i=(a_1,a_2,\ldots ,a_{m_i})$ where $E_i=\{a_1,a_2,\ldots ,a_{m_i}\}.$ Moreover, if $G$ denotes the subgroup of $S_n$ generated by $\sigma_1,\sigma_2, \ldots ,\sigma_k,$ then for all factors $u,v\in \mbox{Fact}_x(n)$ we have $u\sim_G v$ if and only if $u|_{E_i}\sim_{ab} v|_{E_i}$ for each $1\leq i\leq k.$ 
\end{lemma}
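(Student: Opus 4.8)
The plan is to reduce the statement to the single-block case already handled by Lemma~\ref{array cycle}, exploiting the fact that the blocks $E_i$ are intervals so that each restriction $u|_{E_i}$ is itself a genuine factor of $x$. Since $\{E_1,\ldots,E_k\}$ is an interval partition ordered by $\max E_i$, I may write $E_i=\{s_i+1,s_i+2,\ldots,s_i+m_i\}$ with $s_i=m_1+\cdots+m_{i-1}$ and $s_1=0$, and let $\phi_i\colon\{1,\ldots,m_i\}\to E_i$ be the order-preserving bijection $\phi_i(p)=s_i+p$. For any $u\in\mbox{Fact}_x(n)$ the word $u\circ\phi_i$ of length $m_i$ is a contiguous subword of $u$, hence lies in $\mbox{Fact}_x(m_i)$. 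This is the only place the interval hypothesis is used, and it is essential: were $E_i$ not an interval, $u|_{E_i}$ would not be a factor of $x$ and Lemma~\ref{array cycle} could not be invoked.

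First I would apply Lemma~\ref{array cycle} to each length $m_i$ to obtain an $m_i$-cycle $\tau_i\in S_{m_i}$ whose generated group acts Abelian transitively on $\mbox{Fact}_x(m_i)$. I then transport $\tau_i$ to the block $E_i$ by conjugation, setting $\sigma_i=\phi_i\tau_i\phi_i^{-1}$ on $E_i$ and extending by the identity on $\{1,\ldots,n\}\setminus E_i$. Each $\sigma_i$ is then an $m_i$-cycle on the set $E_i$, exactly as the statement requires. Because the $E_i$ are pairwise disjoint, the $\sigma_i$ have disjoint supports, hence commute, and the subgroup $G=\langle\sigma_1,\ldots,\sigma_k\rangle$ is their internal direct product $\langle\sigma_1\rangle\times\cdots\times\langle\sigma_k\rangle$; in particular every $g\in G$ can be written $g=\sigma_1^{e_1}\cdots\sigma_k^{e_k}$, and $g$ acts on the positions inside $E_i$ precisely as $\sigma_i^{e_i}$.

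For the forward implication, suppose $v=g*u$ with $g=\sigma_1^{e_1}\cdots\sigma_k^{e_k}$. Since $g^{-1}$ restricts to a bijection of each $E_i$, the word $v|_{E_i}$ is a rearrangement of the letters of $u|_{E_i}$, so $u|_{E_i}\sim_{ab}v|_{E_i}$ for every $i$. For the converse, assume $u|_{E_i}\sim_{ab}v|_{E_i}$ for all $i$. Viewing $u\circ\phi_i$ and $v\circ\phi_i$ as Abelian-equivalent elements of $\mbox{Fact}_x(m_i)$, Abelian transitivity of $\langle\tau_i\rangle$ yields an exponent $e_i$ with $\tau_i^{e_i}*(u\circ\phi_i)=v\circ\phi_i$. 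Setting $g=\sigma_1^{e_1}\cdots\sigma_k^{e_k}$ and unwinding the conjugation $\sigma_i=\phi_i\tau_i\phi_i^{-1}$ together with the action convention $g*u\colon p\mapsto u(g^{-1}(p))$ shows $(g*u)(p)=v(p)$ for every $p\in E_i$ and every $i$, hence $g*u=v$ and $u\sim_G v$.

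The only genuinely delicate point is the converse direction, and its resolution rests on two structural facts working in tandem: the disjointness of the supports, which makes $G$ a direct product and lets me choose the correcting power $e_i$ on each block completely independently; and the Abelian transitivity supplied by Lemma~\ref{array cycle}, which guarantees that on each block a matching power $\tau_i^{e_i}$ actually exists. I expect no serious obstacle beyond bookkeeping with the conjugation, where care is needed only to keep the two index ranges $\{1,\ldots,m_i\}$ and $E_i$ aligned through $\phi_i$.
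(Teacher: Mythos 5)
Your proposal is correct and follows essentially the same route as the paper: apply Lemma~\ref{array cycle} blockwise (using that each $E_i$ is an interval, so the restrictions $u|_{E_i}$ range over $\mbox{Fact}_x(m_i)$), note the resulting cycles have disjoint supports and commute, and split any $g=\sigma_1^{e_1}\cdots\sigma_k^{e_k}$ block by block in each direction of the equivalence. Your explicit conjugation by the order-preserving bijections $\phi_i$ merely makes precise the transport of the cycle from $\{1,\ldots,m_i\}$ to $E_i$ that the paper performs implicitly.
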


\begin{proof} By Lemma~\ref{array cycle}, for each $i$ there exists a cycle $\sigma_i=(a_1,a_2,\ldots ,a_{m_i})$ with $E_i=\{a_1,a_2,\ldots ,a_{m_i}\},$ such that for all factors $u,v\in \mbox{Fact}_x(n)$ we have $u\sim_{\langle \sigma_i\rangle} v$ if and only if $u|_{E_i}\sim_{ab} v|_{E_i}.$ In fact, $\{u|_{E_i}\,|\, u\in \mbox{Fact}_x(n)\}=\mbox{Fact}_x(m_i).$ Moreover as the $E_i$ are pairwise disjoint, the same is true of the $\sigma_i.$ Hence the $\sigma_i$ commute with one another. Thus, given $u, v\in \mbox{Fact}_x(n),$ if $u\sim_G v,$ then there exists $g=\sigma_1^{r_1}\cdots \sigma_k^{r_k}\in G$ such that $v=g* u.$
However, for each $1\leq i\leq k$ we have  $(g* u)|_{E_i}=(\sigma_i^{r_i}* u)|_{E_i},$ hence $u|_{E_i}\sim_{ab} v|_{E_i}.$ Conversely if $u|_{E_i}\sim_{ab} v|_{E_i}$ for each $1\leq i\leq k,$ there exists $r_i$ such that $v|_{E_i}=(\sigma_i^{r_i}* u)|_{E_i}.$ Hence setting $g= \sigma_1^{r_1}\cdots \sigma_k^{r_k}\in G$ we have $v=g* u.$\end{proof}

\noindent We now prove Theorem~\ref{GMH2}.

\begin{proof}[Proof of Theorem~\ref{GMH2}] Let $x\in \{0,1\}^\nats$ be a Sturmian word and let $(G_n)_{n\geq 1}$ be a sequence of Abelian permutation groups. We show that for each $n\geq 1$ there exists a permutation group $G_n'\subseteq S_n$ isomorphic to $G$ such that $\mbox{Card}(\mbox{Fact}_x(n)/\sim_{G'_n})=\epsilon(G'_n)+1.$
Fix $n\geq 1$ and put $G=G_n.$ By the fundamental theorem of finite Abelian groups, $G$ is isomorphic to a direct product  $\ints /m_1\ints\times \ints /m_2\ints\times \cdots \times \ints /m_k\ints$ where the $m_i$ are prime powers. Let $m=T(G)=m_1+m_2+\cdots +m_k.$ By Proposition~\ref{trace} we have $m\leq n.$ Thus, short of adding additional copies of the trivial cyclic group $\ints/1\ints$ or order $1,$ we may assume that $T(G)=n.$ 
Let $E_1=\{1,2,\ldots ,m_1\}, \, E_2=\{m_1+1, \ldots ,m_1+m_2\}, \ldots ,E_k=\{m_1+\cdots m_{k-1}+1,\ldots ,n\}.$ 
Then $\{E_1,E_2,\ldots ,E_k\}$ is an interval partition of $\{1,2,\ldots ,n\}.$ Pick cycles $\sigma_1,\sigma_2,\ldots ,\sigma_k$ as in Lemma~\ref{multi array}. Then the $\sigma_i$ are pairwise disjoint (and hence commute with one another) and each $\sigma_i$ is of order $m_i.$ Hence, the subgroup $G'$ of $S_n$ generated by $\sigma_1,\sigma_2,\ldots ,\sigma_k$ is isomorphic to $G.$
Moreover, $E_1,E_2,\ldots ,E_k$ is the full set of $G'$-orbits of $\{1,2,\ldots n\}$ whence $\epsilon(G')=k.$  
Also by Lemma~\ref{multi array}, for all $u, v\in \mbox{Fact}_x(n)$  we have that $u\sim_{G'}v$ if and only if $u|_{E_i}\sim_{ab} v|_{E_i}$ for each $1\leq i\leq k.$ Thus the equivalence relation $\sim_{G'}$ on $\mbox{Fact}_x(n)$ coincides with the equivalence relation $\sim_k$ given in Lemma~\ref{intpart}. Thus by Lemma~\ref{intpart} we deduce that
\[\mbox{Card}(\mbox{Fact}_x(n)/\sim_{G'}) =\mbox{Card}(\mbox{Fact}_x(n)/\sim_k)=k+1=\epsilon(G')+1\]
as required. This concludes our proof of Theorem~\ref{GMH2}. \end{proof}

\noindent As an immediate consequence of Theorem~\ref{GMH2} and Cayley's theorem we have

\begin{corollary}Let $G$ be an Abelian group of order $n.$ Then for every Sturmian word $x$ there exists a permutation group $G'\subseteq S_n$  isomorphic to $G$ such that $\mbox{Card}(\mbox{Fact}_x(n)/\sim_{G'})=\epsilon(G')+1.$
\end{corollary}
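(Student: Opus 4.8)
The plan is to deduce the corollary directly from Theorem~\ref{GMH2}, using Cayley's theorem only to present the abstract Abelian group $G$ as a concrete Abelian permutation group inside $S_n$, which is precisely the form in which Theorem~\ref{GMH2} accepts its input.

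First I would invoke Cayley's theorem through the (left) regular representation. After fixing a bijection between the underlying set of $G$ and $\{1,2,\ldots,n\}$ (recall $|G|=n$), each $g\in G$ acts on $G$ by $h\mapsto gh$, and this yields a faithful homomorphism $\lambda\colon G\rightarrow S_n$. Since $G$ is Abelian, the image $\lambda(G)$ is an Abelian subgroup of $S_n$ isomorphic to $G$. This is the sole use of Cayley's theorem, and its role is exactly to guarantee that $G$ can be realized as an Abelian permutation subgroup of $S_n$ at the specific index $n=|G|$.

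Next I would package $\lambda(G)$ into an admissible object of $\Gab$. Define $\omega=(G_m)_{m\ge1}$ by $G_n=\lambda(G)$ and $G_m=Id_m$ for every $m\ne n.$ Each $G_m$ is an Abelian subgroup of $S_m$, so $\omega\in\Gab$, and Theorem~\ref{GMH2} applies to $x$ and $\omega$. It produces $\omega'=(G'_m)_{m\ge1}\in\Gab$ isomorphic to $\omega$ with $p_{\omega',x}(m)=\epsilon(G'_m)+1$ for every $m\ge1.$ Reading this at the single index $m=n$ gives a subgroup $G'_n\subseteq S_n$ with $G'_n\cong G_n=\lambda(G)\cong G$ and $\mbox{Card}(\mbox{Fact}_x(n)/\sim_{G'_n})=p_{\omega',x}(n)=\epsilon(G'_n)+1.$ Setting $G'=G'_n$ then yields exactly the asserted permutation group.

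There is no genuine obstacle once Theorem~\ref{GMH2} is in hand; the only points needing care are bookkeeping ones. I would make sure that applying a statement about whole sequences to extract a single-index conclusion is legitimate --- this is handled by padding with the trivial groups $Id_m$ --- and that the isomorphism type is transported correctly, namely $G'_n\cong G_n$ by Theorem~\ref{GMH2} and $G_n\cong G$ by construction, so that $G'\cong G$ as required. I would also note that the inequality $T(G)\le n$ used inside the proof of Theorem~\ref{GMH2} is automatic here, either from Proposition~\ref{trace} applied to $\lambda(G)\subseteq S_n$ or directly from $\sum_i m_i\le\prod_i m_i=|G|=n$ for the prime-power factors $m_i$ of $G$; consequently no extra verification is needed beyond invoking the theorem.
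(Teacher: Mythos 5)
Your proof is correct and matches the paper's intended argument: the paper presents this corollary as an immediate consequence of Theorem~\ref{GMH2} and Cayley's theorem, exactly the route you take via the regular representation $\lambda\colon G\rightarrow S_n$ followed by an application of Theorem~\ref{GMH2}. Your padding with the trivial groups $Id_m$ at indices $m\neq n$ is the right bookkeeping device to pass from the sequence-level statement to the single-index conclusion, and your remark that $T(G)\leq n$ holds automatically here is accurate though not needed once the theorem is invoked.
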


The following example illustrates that in Theorem~\ref{GMH2}, we cannot replace ``isomorphic" by ``conjugate".  Let $G$ be the cyclic subgroup of order $3$ of $S_6$ generated by the permutation $\sigma=(1,2,3)(4,5,6).$ 
Then $\epsilon (G)=2.$ 
We will show that if $x$ is the Fibonacci word, then \[\mbox{Card}\left(\mbox{Fact}_x(6)/\sim_{G'}\right)\geq 4\] for each subgroup $G'$ of $S_6$ conjugate to $G.$ 
To see this, let $G'\subseteq S_6$ be generated by the permutation $(a,b,c)(d,e,f)$ where $\{a,b,c,d,e,f\}=\{1,2,3,4,5,6\}.$    We claim that $100101$ and $101001$ belong to distinct equivalence classes under the action of $G'$ on $\{0,1\}^6.$ In fact, suppose to the contrary that $g(100101)=101001$ for some $g\in G'.$ Then $g(\{1,4,6\})=\{1,3,6\}$ and $g(\{2,3,5\})=\{2,4,5\}.$
We claim that either $g(4)=3$ or $g(3)=4.$ Otherwise, $g: 4\mapsto x\mapsto y$ where $\{x,y\}=\{1,6\}.$
But $4\notin g(\{1,6\}).$ Thus without loss of generality we can assume $g(4)=3.$ This means that $g(\{1,6\})=\{1,6\},$ whence $g^2(1)=1,$ which implies that $g^2=id,$ a contradiction. Having established the claim, consider the action of $G'$ on the factors of length $6$ of the Fibonacci word. One Abelian class is of size five  $\{001001, 001010, 010010, 010100, 100100\}$ and the other of size two $\{100101,101001\}.$ Since $|G'|=3,$ there must be at least two distinct equivalence classes in the first Abelian class, and following the claim, two equivalence classes in the second. Thus at least $4$ equivalence classes combined.

\noindent On the other hand:

\begin{corollary}Let $\sigma \in S_n$ and $G=\langle \sigma \rangle.$ Writing $\sigma=\sigma_1\cdots \sigma_k$ as a product of disjoint cycles, suppose $\gcd(|\sigma_1|,\ldots ,|\sigma_k|)=1.$ Then for every Sturmian word $x$ there exists  $G'\subseteq S_n$ conjugate to $G$ such that $\mbox{Card}\left(\mbox{Fact}_x(n)/\sim_{G'}\right)=\epsilon(G)+1.$
\end{corollary}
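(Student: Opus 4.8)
The plan is to reduce the statement, via Theorem~\ref{GMH1} and an explicit construction, to a compatibility condition on a system of congruences governed by the hypothesis $\gcd(|\sigma_1|,\ldots,|\sigma_k|)=1$. Write $m_i=|\sigma_i|$ and $k=\epsilon(G)$, noting that the cycles of $\sigma$ are exactly its orbits, so $\epsilon(G)=k$, and that $\epsilon$ is a conjugacy invariant, whence any conjugate $G'$ has $\epsilon(G')=k$. Since a Sturmian word is aperiodic, Theorem~\ref{GMH1} already gives $\mathrm{Card}(\mathrm{Fact}_x(n)/\sim_{G'})\ge k+1$ for every such $G'$; the content is therefore to exhibit one conjugate $G'$ realizing the reverse inequality. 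First I would fix an interval partition $\{E_1,\ldots,E_k\}$ of $\{1,2,\ldots,n\}$ with $\mathrm{Card}(E_i)=m_i$ and, using Lemma~\ref{array cycle}, choose for each $i$ an $m_i$-cycle $\sigma_i'$ supported on $E_i$ whose action on the $i$-th block is Abelian transitive. Setting $\sigma'=\sigma_1'\cdots\sigma_k'$ and $G'=\langle\sigma'\rangle$, the permutation $\sigma'$ has cycle type $(m_1,\ldots,m_k)$, exactly that of $\sigma$, so $G'$ is conjugate to $G$.

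The next step is a reduction to a purely arithmetic question. Because each $\sigma_i'$ preserves Abelian class on its block and fixes all coordinates outside $E_i$, the relation $\sim_{G'}$ refines the relation $\sim_k$ of Lemma~\ref{intpart}, which for a Sturmian $x$ has exactly $k+1$ classes; hence it suffices to prove $\sim_{G'}=\sim_k$, i.e.\ that every $\sim_k$-class is a single $\langle\sigma'\rangle$-orbit. Fix $u,v$ in one $\sim_k$-class, so $u|_{E_i}\sim_{ab}v|_{E_i}$ for all $i$. Since $\sigma_i'$ is an $m_i$-cycle whose orbit through $u|_{E_i}$ has cardinality $m_i$ and, by Abelian transitivity, contains $v|_{E_i}$, there is a unique residue $j_i\in\ints/m_i\ints$ with $\sigma_i'^{\,j_i}*u|_{E_i}=v|_{E_i}$; moreover $\sigma'^{\,j}*u=v$ holds for an integer $j$ if and only if $j\equiv j_i\pmod{m_i}$ for every $i$. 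By the Chinese Remainder Theorem such a $j$ exists if and only if $j_i\equiv j_l\pmod{\gcd(m_i,m_l)}$ for all $i,l$, so the whole theorem comes down to verifying these congruences.

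The hard part is precisely this compatibility, and it is where the hypothesis must enter: the preceding example with $\sigma=(1,2,3)(4,5,6)$, where $\gcd=3$, shows that when $\gcd(m_i,m_l)>1$ the residues $j_i,j_l$ need not agree and a $\sim_k$-class can genuinely split. To attack the general case I would pass to the rotation coding of $x$: realizing $x$ as the coding of a rotation by an irrational $\alpha$, each length-$n$ factor $u$ corresponds to an occurrence point $\theta_u$ on the circle, each block restriction $u|_{E_i}$ to the shifted point $\theta_u+o_i\alpha$ (with $o_i$ the offset of $E_i$), and Lemma~\ref{array cycle} identifies the block shift with a count of the partition points $\{-t\alpha\}$, for $t$ ranging over the index range of $E_i$, lying in the circular arc between $\theta_u$ and $\theta_v$. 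In these terms $j_i \bmod m_i$ becomes an explicit arc-count, the balancedness of $x$ (via the three-distance theorem) controls these counts uniformly across the blocks, and one must show that $\gcd(m_1,\ldots,m_k)=1$ is exactly what forces the agreement modulo each $\gcd(m_i,m_l)$. I expect this verification --- translating ``same $\sim_k$-class'' into an equality of arc-counts and extracting the congruences $j_i\equiv j_l\pmod{\gcd(m_i,m_l)}$ from the global coprimality --- to be the main obstacle; the clean sub-case is when the $m_i$ are pairwise coprime, for then $\langle\sigma'\rangle$ is the full direct product of Lemma~\ref{multi array} and $\sim_{G'}=\sim_k$ is immediate. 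Once the congruences are in hand, the Chinese Remainder Theorem supplies a common $j$, every $\sim_k$-class is a single $\langle\sigma'\rangle$-orbit, and $\mathrm{Card}(\mathrm{Fact}_x(n)/\sim_{G'})=k+1=\epsilon(G)+1$, as required.
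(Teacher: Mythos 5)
Your construction and the reduction to $\sim_{G'}\;=\;\sim_k$ are set up correctly, but the proposal stops exactly where the proof has to happen. Having taken $G'=\langle\sigma'\rangle$ cyclic with $\sigma'=\sigma_1'\cdots\sigma_k'$ (which is indeed conjugate to $G$, automatically, from the cycle type), you correctly observe that for $u,v$ in a common $\sim_k$-class one needs a single exponent $j$ with $j\equiv j_i \pmod{m_i}$ for all $i$, hence the pairwise compatibility conditions $j_i\equiv j_l\pmod{\gcd(m_i,m_l)}$ --- and you then declare this verification ``the main obstacle,'' offering only an unexecuted plan (rotation coding, arc counts, three-distance theorem). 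Since everything before that point is routine --- the lower bound $p\geq k+1$ is Theorem~\ref{GMH1}, and the refinement $\sim_{G'}\subseteq\sim_k$ is immediate --- the unproven compatibility \emph{is} the statement, so the proposal is not a proof. There is also a local slip: $j_i$ is not unique modulo $m_i$, since $u|_{E_i}$ can have a nontrivial stabilizer in $\langle\sigma_i'\rangle$ (e.g.\ the orbit of $00$ under the $2$-cycle $(1,2)$ has size $1$), so the admissible exponents form a coset of the stabilizer rather than a single residue; this error works in your favor, since it enlarges the solution sets, but it too is left unexamined.

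The paper takes a genuinely different and much shorter route, spending the hypothesis at the group-theoretic level instead of on arithmetic: from $\gcd(|\sigma_1|,\ldots,|\sigma_k|)=1$ it asserts the identity $G=\langle\sigma_1,\ldots,\sigma_k\rangle$, pads with $1$-cycles so that $\sum_i|\sigma_i|=n$, fixes an interval partition with $\mbox{Card}(E_i)=|\sigma_i|$, and takes for $G'$ the group generated by the disjoint cycles $\sigma_1',\ldots,\sigma_k'$ of Lemma~\ref{multi array} --- not your cyclic $\langle\sigma'\rangle$. Then $\sim_{G'}$ coincides with $\sim_k$ by Lemma~\ref{multi array} (no Chinese-remainder analysis arises, because $G'$ contains each $\sigma_i'$ individually), and Lemma~\ref{intpart} gives the count $k+1$. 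It is worth noting that your CRT analysis actually puts a finger on the delicate point of the paper's one-line identity: $\langle\sigma\rangle=\langle\sigma_1,\ldots,\sigma_k\rangle$ holds precisely when the $m_i$ are pairwise coprime, which for $k\geq 3$ is stronger than overall $\gcd$ equal to $1$ (for cycle type $(2,4,3)$ one has $\gcd(2,4,3)=1$ yet $\langle\sigma\rangle$ has order $\mathrm{lcm}=12$ while $\langle\sigma_1,\sigma_2,\sigma_3\rangle$ has order $24$, and $(1,2)\notin\langle\sigma\rangle$). So the non-pairwise-coprime case you flag as the hard one is exactly where the paper's own argument needs care. That is a fair criticism of the paper, but it does not rescue the proposal: as written you neither prove the compatibility congruences nor show how the Sturmian structure forces them, so the required equality $\mbox{Card}\left(\mbox{Fact}_x(n)/\sim_{G'}\right)=\epsilon(G)+1$ for your cyclic $G'$ remains open.
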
 

\begin{proof} Since $\gcd(|\sigma_1|,\ldots ,|\sigma_k|)=1,$ we have $G =\langle \sigma_1,\sigma_2,\ldots \sigma_k \rangle.$ Adding if necessary additional $\sigma_i$ of the form $\sigma_i=(a),$ we may assume that
$\sum_{i=1}^k|\sigma_i|=n.$ Let $\{E_1,E_2,\ldots ,E_k\}$ be an interval partition of $\{1,2,\ldots ,n\}$ such that 
$\mbox{Card}(E_i)=|\sigma_i|.$ By Lemma~\ref{multi array}, there exist disjoint cycles $\sigma_1',\sigma_2',\ldots ,\sigma_k'$ 
such that $|\sigma_i|=|\sigma_i'|$ and,  if $G'$ denotes the subgroup of $S_n$ generated by $\sigma'_1,\sigma'_2, \ldots ,\sigma'_k,$ then for all factors $u,v\in \mbox{Fact}_x(n)$ we have $u\sim_G' v$ if and only if $u|_{E_i}\sim_{ab} v|_{E_i}$ for each $1\leq i\leq k.$ Thus $G$ and $G'$ are conjugate in $S_n,$ and by Lemma~\ref{intpart} we have
\[\mbox{Card}(\mbox{Fact}_x(n)/\sim_{G'}) =\mbox{Card}(\mbox{Fact}_x(n)/\sim_k)=k+1=\epsilon(G')+1=\epsilon(G)+1.\]

\end{proof}

 \end{document}